\definecolor{mahogany}{cmyk}{0, 0.77, 0.87, 0}
\definecolor{salmon}{cmyk}{0, 0.53, 0.38, 0}
\definecolor{melon}{cmyk}{0, 0.46, 0.50, 0}
\definecolor{yellowgreen}{cmyk}{0.44, 0, 0.74, 0}
\definecolor{brickred}{cmyk}{0, 0.89, 0.94, 0.28}
\definecolor{OliveGreen}{cmyk}{0.64, 0, 0.95, 0.40}
\definecolor{RawSienna}{cmyk}{0, 0.72, 1.0, 0.45}
\definecolor{ZurichRed}{rgb}{1, 0, 0} 
\newcommand{\p}{{\mathbb{P}}}
\definecolor{rb}{rgb}{0.1,0.2, 0.7}
\begin{document}

\newtheorem{lemma}[thm]{Lemma}
\newtheorem{proposition}{Proposition}
\newtheorem{theorem}{Theorem}[section]
\newtheorem{deff}[thm]{Definition}
\newtheorem{case}[thm]{Case}
\newtheorem{prop}[thm]{Proposition}
\newtheorem{example}{Example}

\newtheorem{corollary}{Corollary}

\theoremstyle{definition}
\newtheorem{remark}{Remark}

\numberwithin{equation}{section}
\numberwithin{definition}{section}
\numberwithin{corollary}{section}

\numberwithin{theorem}{section}

\numberwithin{remark}{section}
\numberwithin{example}{section}
\numberwithin{proposition}{section}

\newcommand{\gap}{\lambda_{2,D}^V-\lambda_{1,D}^V}
\newcommand{\gapR}{\lambda_{2,R}-\lambda_{1,R}}
\newcommand{\bD}{\mathrm{I\! D\!}}
\newcommand{\calD}{\mathcal{D}}
\newcommand{\calA}{\mathcal{A}}

\newcommand{\conjugate}[1]{\overline{#1}}
\newcommand{\abs}[1]{\left| #1 \right|}
\newcommand{\cl}[1]{\overline{#1}}
\newcommand{\expr}[1]{\left( #1 \right)}
\newcommand{\set}[1]{\left\{ #1 \right\}}

\newcommand{\calC}{\mathcal{C}}
\newcommand{\calE}{\mathcal{E}}
\newcommand{\calF}{\mathcal{F}}
\newcommand{\Rd}{\mathbb{R}^d}
\newcommand{\BR}{\mathcal{B}(\Rd)}
\newcommand{\R}{\mathbb{R}}
\newcommand{\T}{\mathbb{T}}
\newcommand{\D}{\mathbb{D}}

\newcommand{\al}{\alpha}
\newcommand{\RR}[1]{\mathbb{#1}}
\newcommand{\bR}{\mathrm{I\! R\!}}
\newcommand{\ga}{\gamma}
\newcommand{\om}{\omega}
\newcommand{\A}{\mathbb{A}}
\newcommand{\bH}{\mathbb{H}}

\newcommand{\bb}[1]{\mathbb{#1}}
\newcommand{\bI}{\bb{I}}
\newcommand{\bN}{\bb{N}}

\newcommand{\uS}{\mathbb{S}}
\newcommand{\M}{{\mathcal{M}}}
\newcommand{\calB}{{\mathcal{B}}}

\newcommand{\W}{{\mathcal{W}}}

\newcommand{\m}{{\mathcal{m}}}

\newcommand {\mac}[1] { \mathbb{#1} }

\newcommand{\bC}{\Bbb C}

\newtheorem{rem}[theorem]{Remark}
\newtheorem{dfn}[theorem]{Definition}
\theoremstyle{definition}
\newtheorem{ex}[theorem]{Example}
\numberwithin{equation}{section}

\newcommand{\Pro}{\mathbb{P}}
\newcommand\F{\mathcal{F}}
\newcommand\E{\mathbb{E}}
\newcommand\e{\varepsilon}
\def\H{\mathcal{H}}
\def\t{\tau}

\title[Weighted estimates]{Weighted square function estimates}

\author{Rodrigo Ba\~nuelos}\thanks{R. Ba\~nuelos is supported in part  by NSF Grant
\# 0603701-DMS}
\address{Department of Mathematics, Purdue University, West Lafayette, IN 47907, USA}
\email{banuelos@math.purdue.edu}
\author{Adam Os\k ekowski}\thanks{A. Os\k ekowski is supported in part by Narodowe Centrum Nauki (Poland) grant DEC-2014/14/E/ST1/00532.}
\address{Department of Mathematics, Informatics and Mechanics, University of Warsaw, Banacha 2, 02-097 Warsaw, Poland}
\email{ados@mimuw.edu.pl}

\subjclass[2010]{Primary: 60G44. Secondary: 40B25}
\keywords{martingale, square function, weight, Bellman function}

\begin{abstract}
The paper contains the proof of $L^p$-weighted norm inequalities  for both, martingales square functions and the classical square functions in harmonic analysis of Littlewood-Paley and Lusin. Furthermore,  the bounds are completely explicit and are  optimal not only on the dependence of the characteristics of the weight but also on the dependance on $p$, as  $p\to\infty$. The proof rests on Bellman function method: the estimates are deduced from the existence of an appropriate and rather complicated function of four variables.
\end{abstract}

\maketitle
\section{Introduction}

In \cite{BO}, the authors used the Bellman function approach to give new proofs of weighted $L^2$-norm inequalities for martingales and Littlewood-Paley square functions with the optimal dependence on the $A_2$ characteristics $[w]_{A_2}$ of the weight $w$ and further explicit constants.  This paper is a continuation of the work in \cite{BO} and contains a significant improvements extending those results to the $L^p$-norm inequalities  and  addressing questions raised in that paper (see second paragraph of \S5) concerning the optimal dependence not only  on the $A_p$ characteristics $[w]_{A_p}$, but also on the constant $C_p$ which appears in the $L^p$--norm inequalities.    
As in \cite{BO}, the proofs for the Littlewood-Paley square function will depend heavily on martingale estimates. Beyond these applications, the martingale square functions inequalities are of independent interest with wide range of further implications. Some convenient references here are the works \cite{Ban1, Mey2, Mey1, Var} which exhibits the connections between martingale square functions and various classical square functions arising in harmonic analysis; see also the monographs \cite{BanMoo, Bass, Ste} for further illustration of the applications in this direction.  

Let us introduce the necessary probabilistic background and formulate our main results in the martingale context; the corresponding results for Littlewood-Paley square functions will be presented later. Assume that $(\Omega,\F,\mathbb{P})$ is a complete probability space, filtered by $(\F_t)_{t\geq 0}$, a nondecreasing right-continuous sequence of sub-$\sigma$-algebras of $\F$. Suppose in addition that $\F_0$ contains all the events of probability $0$ and that all adapted martingales have continuous paths (this is the case, for instance, for the Brownian filtration). Let $X=(X_t)_{t\geq 0}$ be an adapted, uniformly integrable continuous-path martingale and let $\langle X \rangle=(\langle X_t\rangle)_{t\geq 0}$ denote its quadratic covariance process (square function). See e.g. Dellacherie and Meyer \cite{DM} for more information on the subject. 
Let $MX=\sup_{t\geq 0}|X_t|$ be the maximal function of $X$. To introduce the appropriate analogue of martingale $A_p$ weights, assume that $W$ is an integrable random variable. This variable gives rise to the uniformly integrable martingale $(W_t)_{t\geq 0}$ given by $W_t=\E(W|\F_t)$; sometimes, for consistence, we will write $W_\infty$ instead of $W$. 
 Following Izumisawa and Kazamaki \cite{IK}, we say that $W$ satisfies Muckenhoupt's condition $A_p(\text{mart})$ (where $1<p<\infty$ is a fixed parameter), if
 \begin{equation}\label{probAp}
[W]_{A_{p}}:=\sup_{t\geq 0}\left\|W_{t}\big(\E\big[W^{-1/(p-1)}\big|\F_{t}\big]\big)^{p-1}\right\|_{\infty}<\infty.
\end{equation}
We will also need a version of this condition for $p=1$. We say that $W$ is an $A_1$ weight, if there is a finite constant $C$ such that $CW_t\geq  MW$ almost surely for all $t\geq 0$. The smallest $C$ with this property will be denoted by $[W]_{A_1}$.

Any random variable $W$ as above is a density of the measure $\mbox{d}\mathbb{Q}:=W\mbox{d}\mathbb{P}$, and can be regarded as a weight. We will use the notation
$$ \|f\|_{L^p(W)}=\left(\E |f|^pW\right)^{1/p},\qquad 1\leq p<\infty,$$
for the norm in the associated weighted $L^p$ space.  With these definitions, we will prove the following theorem  extending the results in \cite{BO}.

\begin{theorem}\label{mainthm} 
Suppose that $W$ is an $A_p$ weight and $X$ is a martingale bounded in $L^p(W)$. Then for any $1<p<\infty$ we have the estimate
\begin{equation}\label{mainin}
\|{\langle X \rangle}^{1/2}_\infty\|_{L^p(W)}\leq K_p[W]_{A_p}^{\max\{1/2,1/(p-1)\}}\|X_\infty\|_{L^p(W)},
\end{equation}
where
$$ K_p=\begin{cases}
553(p-1)^{-1} &\mbox{if }1<p\leq 3,\\
189p^{1/2} & \mbox{if }p\geq 3.
\end{cases}$$
The exponent $\max\{1/2,1/(p-1)\}$ is the best possible. Furthermore, the orders of $K_p$ as $p\to 1+$ and $p\to \infty$ are also optimal (as they are already the best possible in the unweighted case).
\end{theorem}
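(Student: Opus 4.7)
Write $Q=[W]_{A_p}$ and let $V_t=\E[W^{-1/(p-1)}|\F_t]$, so that \eqref{probAp} becomes the pointwise bound $W_tV_t^{p-1}\leq Q$. Following the framework of \cite{BO}, the plan is to construct a Bellman function $B=B_{p,Q}(x,y,v,w)$ on the four-dimensional domain
$$\calD_Q=\{(x,y,v,w)\in\R\times[0,\infty)\times(0,\infty)^2:\,1\leq wv^{p-1}\leq Q\}$$
satisfying three properties: a \emph{size bound} on $\{y=0\}$ that after applying H\"older's inequality to the initial data recovers the weighted $L^p$-norm of $X_\infty$ with prefactor $K_p^pQ^{p\max\{1/2,1/(p-1)\}}$; a \emph{majorization} $B(x,y,v,w)\geq y^{p/2}w$; and a \emph{composite concavity} making the process $B(X_t,\langle X\rangle_t,V_t,W_t)$ a supermartingale. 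Concretely, the concavity condition reduces to $B_y+\tfrac12 B_{xx}\leq 0$ together with nonpositivity of the Hessian of $B$ in $(x,v,w)$ on the tangent space of each level surface $\{wv^{p-1}=\mathrm{const}\}$. Given such a $B$, It\^o's formula (with a standard truncation/localization to secure integrability) yields
$$\E\langle X\rangle_\infty^{p/2}W\leq \E B(X_\infty,\langle X\rangle_\infty,V_\infty,W_\infty)\leq \E B(X_0,0,V_0,W_0)\leq K_p^pQ^{p\max\{1/2,1/(p-1)\}}\|X_\infty\|_{L^p(W)}^p,$$
which is precisely \eqref{mainin}. Optimality of the exponent $\max\{1/2,1/(p-1)\}$ follows from classical power-weight examples, and the orders $K_p\sim(p-1)^{-1}$ and $K_p\sim p^{1/2}$ are already sharp in Burkholder's unweighted square function inequality, hence are inherited automatically.

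The construction of $B$ splits at $p=3$, with each regime demanding a different ansatz. For $p\geq 3$ I would start from a modification of the standard Burkholder Bellman function for the unweighted $L^p$-square function inequality --- whose magnitude is $\asymp p^{p/2}|x|^p$ on $\{y=0\}$ --- and multiply it by a weight factor of the form $w\,\Phi(wv^{p-1})$, where $\Phi$ is a decreasing concave profile on $[1,Q]$ whose growth at $Q$ accounts for the $Q^{p/2}$ size condition and whose shape preserves the composite concavity under the $A_p$-constrained deformations of $(v,w)$. For $1<p\leq 3$ I would use a different ansatz --- in the spirit of the $p=2$ construction of \cite{BO} --- augmented by a $(p-1)^{-p}$ normalization that produces the required blow-up at $p=1^+$, together with a $Q^{p/(p-1)}$ profile in the $A_p$-variables. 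In both regimes the verification reduces to a finite-dimensional nonlinear inequality carrying $p$ as a parameter, and the transition at $p=3$ emerges from the geometry of those inequalities.

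The main obstacle is exactly this construction-plus-verification of the composite concavity: producing a closed-form $B$ whose two-sided size conditions scale correctly in both $Q$ and $p$, while its composite four-variable Hessian remains nonpositive throughout $\calD_Q$. This is where the numerical prefactors $553$ and $189$ arise and where the piecewise definition of $K_p$ is forced by the degeneracy of the concavity condition. In \cite{BO} this program was executed only for $p=2$, where the geometry is considerably simpler; extending it across the full range of $p$ requires carrying an additional parameter through an intricate Hessian computation and managing its asymptotic behaviour at both endpoints $p\to 1^+$ and $p\to\infty$.
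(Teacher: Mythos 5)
Your plan is structurally sound but takes a genuinely different route from the paper, and the difference is the crux of the matter. You propose to build a family of Bellman functions $B_{p,Q}$, with separate ans\"atze on $p\geq 3$ and $1<p\leq 3$, and to verify the composite concavity for each value of $p$. The paper does something considerably more economical: it constructs the Bellman function \emph{only} at the critical exponent $p_0=3$ (where the two pieces $1/2$ and $1/(p-1)$ of the optimal exponent coincide), proving the single estimate
$$\E\Big[\big(\langle X\rangle_t^{3/2}-64^3[W]_{A_3}^{3/2}|X_t|^3\big)W\Big]\leq 0,$$
and then propagates this to all $1<p<\infty$ by a \emph{martingale version of Rubio de Francia extrapolation} (Section \ref{extrapolation}, following Duoandikoetxea). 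The extrapolation rests on three ingredients you do not mention: (a) the factorization-type Lemma \ref{lem31} for $A_p(\text{mart})$ weights; (b) the Rubio de Francia operator $Rf=\sum_k M^kf/(2\|M\|_{L^p(W)})^k$, which by Doob's inequality makes perfect sense for the martingale maximal operator $M$ and produces an $A_1$ majorant with controlled characteristic; and (c) the sharp Buckley-type bound $\|M\|_{L^p(W)}\lesssim \frac{pe}{p-1}[W]_{A_p}^{1/(p-1)}$ from \cite{Os1}. Running the extrapolation machine with $p_0=3$, $C=64$, $\kappa=1/2$ and tracking all constants explicitly produces the piecewise $K_p$ with the numerical values $553$ and $189$ and the break at $p=3$; they are extrapolation artefacts, not outputs of a Bellman construction at the exponent $p$.

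This matters for feasibility. Even the $p=3$ construction in the paper is a three-regime, five-block function requiring delicate Hessian computations (Lemmas \ref{lem1}--\ref{lem2}), and carrying $p$ through such a computation as a free parameter would be substantially harder and is precisely what the paper is organized to avoid. Your plan is not wrong in principle, but you should be aware that the published proof sidesteps the family construction entirely; without the extrapolation step you would be signing up for much more work, and it is not clear a closed-form $p$-dependent $B$ satisfying all the size and concavity constraints exists with the advertised explicit constants. Two smaller issues: the paper's initial condition is posed at $y=x^2$ (with the convention $\langle X\rangle_0=X_0^2$), and together with the majorization $B\geq \kappa(y^{3/2}w-64^3c^{3/2}|x|^3w)$ the chain closes in one step, so there is no separate ``size bound on $\{y=0\}$'' nor any need to bound $B(X_0,0,V_0,W_0)$ by $\|X_\infty\|_{L^p(W)}^p$ via H\"older as you suggest; and the sharp unweighted $L^p$ square function constants you invoke are due to Davis \cite{Da}, not Burkholder. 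Finally, the paper observes explicitly that extrapolation cannot be applied directly to $g_*$ on the analytic side (it would contradict the known failure of the unweighted $L^p$ bound for $g_*$ when $1<p<2$), which is exactly why the extrapolation is done at the martingale level before transferring to Theorem \ref{weights}.
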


 As shown in \cite{BO},  Theorem \ref{mainthm} implies corresponding results for the classical Littlewood-Paley and Lusin square functions on the circle $\mathbb{T}$, $\R^n$, $n\geq 1$,  and more general Markovian semigroups.  These estimates inherit the same bounds as those in inequality \eqref{mainin}. To avoid repeating the statements and proofs in \cite{BO}, we simply state our result here for the circle and leave the others to the reader. The Littlewood-Paley $g_*$--function on the circle $\mathbb{T}=\partial \mathbb{D}$ (cf. \cite{Ban1,Zyg}) is given by
\begin{equation}\label{defgstar}
 g_*(f)(e^{i\theta})=\left(\frac{1}{\pi}\int_{\mathbb{D}}\frac{1-|z|^2}{|z-e^{i\theta}|^2}\ln\frac{1}{|z|}|\nabla u_f(z)|^2\mbox{d}z\right)^{1/2},
 \end{equation}
where $dz$ is the area measure on the plane, $f$ is an integrable function on $\mathbb{T}$ and $u_f$ stands for the harmonic extension of $f$ to the disc $\mathbb{D}$. The function $g_*(f)$ carries a lot of information about the behavior of $f$, for instance, it is well-known that for $2\leq p<\infty$ there is a finite constant $C_p$ depending only on $p$ such that
\begin{equation}\label{g*} \|g_*(f)\|_{L_p(\mathbb{T})}\leq C_p\|f\|_{L_p(\mathbb{T})}.
\end{equation} 
On contrary, for $1<p<2$ this estimate fails to hold.  In fact, using the sharp version of the unweighted inequality \eqref{mainin}, due to Davis \cite{Da}, it is shown in \cite{Ban1} that if $a_p$ is the smallest zero of the confluent hypergeometric function of parameter $p$, then 

$$C_p\leq \frac{1}{\sqrt{2}\,a_p}\leq\left(\frac{8}{\pi^2}\right)^{1/2}\left[ 4\Gamma\left(\frac{p+1}{2}\right)\right]^{1/p},$$
which is of order $O(\sqrt{p})$ as $p\to\infty$, and that this order is best possible.  This result also holds on $\R^n$ with the same constant.

 In recent years, questions about the weighted version of such estimate gained  a lot of interest in the literature;  we refer to \cite{BO} and references therein for this large literature.  In what follows, the  word ``weight'' refers to a locally integrable, positive function  on the underlying space (which in the above setting is the unit circle $\mathbb{T}$ with Haar measure),  usually  denoted by $w$. Given $p\in (1,\infty)$, we say that $w\in A_p(Poisson,\mathbb{T})$ ($w$ is a Poissonian $A_p$ weight), if the $A_p$ characteristics $[w]_{A_p}$, given by
\begin{equation}\label{Poisson-T}
 [w]_{A_{p,\mathbb{T}}}:=\|u_w(z)\big(u_{w^{-1/(p-1)}}(z)\big)^{p-1}\|_{L^\infty(\mathbb{D})},
\end{equation}
is finite. As discussed  in \cite{BO}, these are the probabilistic Muckenhoupt $A_p$-weights of martingales on filtration of Brownian motion killed upon leaving the unit disc.  They are, in fact, a special case of more general Markovian semigroup $A_p$-weights; see \cite{BO} for more on this. 

\begin{theorem}\label{weights}
For any $2\leq p< \infty$ the following inequality holds: 
\begin{equation}\label{g*one} \|g_*(f)\|_{L^p(w)}\leq K_p[w]_{A_{p,\mathbb{T}}}^{\max\{1/2,1/(p-1)\}}\|f\|_{L^p(w)},
\end{equation}
where
$$ K_p=\begin{cases}
553(p-1)^{-1} &\mbox{if }2\leq p\leq 3,\\
189p^{1/2} & \mbox{if }p\geq 3.
\end{cases}$$
The exponent $\max\{1/2,1/(p-1)\}$ is the best possible. Furthermore, the order of $K_p$ as  $p\to \infty$ is also optimal as it is already the best possible in the unweighted case.
\end{theorem}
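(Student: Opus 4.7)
Following the strategy of \cite{BO}, I would reduce \eqref{g*one} to the martingale inequality \eqref{mainin} via a probabilistic representation of $g_*$ by a Brownian martingale, combined with conditional Jensen's inequality. The restriction $p\geq 2$ will appear naturally as the range in which that last step works.

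Let $(B_t)_{t\geq 0}$ be standard planar Brownian motion issued from the origin, carried by a filtered probability space $(\Omega,\F,(\F_t)_{t\geq 0},\Pro)$, and let $\tau$ be its first exit time from $\D$, so that $B_\tau$ is uniformly distributed on $\T$. Since $u_f$ is harmonic, It\^o's formula shows that $X_t:=u_f(B_{t\wedge\tau})$ is a continuous martingale with $X_\infty=f(B_\tau)$ and
$$\langle X\rangle_\infty=\int_0^\tau|\nabla u_f(B_s)|^2\,ds.$$
A standard computation based on the Green's function $G(0,z)=\frac{1}{2\pi}\ln(1/|z|)$ of $\D$ and Doob's $h$-transform by the Poisson kernel yields that the conditional occupation density of $B$ given $B_\tau=e^{i\theta}$ is $\frac{1}{\pi}\ln(1/|z|)\,\frac{1-|z|^2}{|z-e^{i\theta}|^2}$; matching against \eqref{defgstar} gives the key identity
$$\E\bigl[\langle X\rangle_\infty\,\big|\,B_\tau\bigr]=g_*^2(f)(B_\tau)\qquad \text{a.s.}$$

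I would next introduce the weight $\tilde W:=w(B_\tau)$ and its martingale $\tilde W_t:=\E[\tilde W\,|\,\F_t]$. For $t\leq\tau$ one has $\tilde W_t=u_w(B_t)$ and $\E[w^{-1/(p-1)}(B_\tau)\,|\,\F_t]=u_{w^{-1/(p-1)}}(B_t)$, so a direct comparison of \eqref{probAp} with \eqref{Poisson-T}, carried out in detail in \cite{BO}, gives $[\tilde W]_{A_p(\text{mart})}=[w]_{A_{p,\T}}$. Since $p/2\geq 1$, conditional Jensen together with $\sigma(B_\tau)$-measurability of $\tilde W$ produces
$$\|g_*(f)\|_{L^p(w)}^p=\E\bigl[\bigl(\E[\langle X\rangle_\infty|B_\tau]\bigr)^{p/2}\tilde W\bigr]\leq \E\bigl[\langle X\rangle_\infty^{p/2}\tilde W\bigr]=\|\langle X\rangle_\infty^{1/2}\|_{L^p(\tilde W)}^p.$$
Theorem~\ref{mainthm} applied to $X$ and $\tilde W$, together with the identity $\|X_\infty\|_{L^p(\tilde W)}^p=\E[|f(B_\tau)|^p w(B_\tau)]=\|f\|_{L^p(w)}^p$, then delivers \eqref{g*one} with the stated constants.

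Sharpness of the exponent $\max\{1/2,1/(p-1)\}$ is transferred from power-type weights on $\T$ as in \cite{BO}, while the order $K_p=O(p^{1/2})$ as $p\to\infty$ is already optimal for $w\equiv 1$ by the discussion following \eqref{g*}. The main technical obstacle here does not lie in the reduction itself but in Theorem~\ref{mainthm}, which supplies the martingale estimate via a delicate Bellman-function construction; within the present argument, the only subtle point is the identification $[\tilde W]_{A_p}=[w]_{A_{p,\T}}$, which one invokes from \cite{BO}.
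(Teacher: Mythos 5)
Your proposal follows essentially the same route as the paper: the probabilistic representation of $g_*^2(f)$ as a conditional expectation of the quadratic variation of the Brownian martingale $u_f(B_{\tau\wedge t})$, the identification $[\tilde W]_{A_p(\mathrm{mart})}=[w]_{A_{p,\mathbb{T}}}$ via the Poisson extension, the conditional Jensen step (which is where $p\geq 2$ enters), and finally an application of Theorem~\ref{mainthm}. This matches the paper's argument step for step, including the transfer of sharpness statements from the unweighted case and from \cite{BO}.
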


The proof of Theorem \ref{mainthm} will be split into two parts. The main difficulty lies in proving the estimate in the case $p=3$ (this is when the terms $1/2$ and $1/(p-1)$ in the definition of the optimal exponent in \eqref{mainin} coincide). Our argument will use the  Bellman function method and rests on the construction of a special function enjoying certain majorization and concavity-type properties. This approach originates from the theory of optimal control (cf. \cite{Be}) and turned out to be very efficient in the study of various semimartingale inequalities, as Burkholder noticed in his pioneering works in the eighties. Then, in the mid-nineties, Nazarov Treil and Volberg (see \cite{NT,NTV}) proved that apart from the probability theory, the method can be successfully applied in several interesting contexts arising in harmonic analysis. Since then, many mathematicians used the technique in the study of numerous important estimates; the literature on the subject is extremely vast, we mention here only the bounds for BMO class studied in \cite{I,SV0,SV1}, inequalities for Muckenhoupt weights \cite{Vas}, estimates for singular integral operators \cite{N,Pet,PV} and estimates for fractional operators \cite{BO2}.

As we shall see, the function we construct is quite complicated. This is done in Section \ref{p=3}.  We strongly believe that this function  can be modified to study other interesting estimates of harmonic analysis. The second part of the proof of Theorem \ref{mainthm} uses extrapolation-type arguments that  allow us to deduce the estimate \eqref{mainin} in the full range $1<p<\infty$ from the single case $p=3$. This will be discussed in Section \ref{extrapolation} below. Let us stress here that there seems to be no analytic extrapolation argument which would be applicable directly to the function $g_*$. Indeed, otherwise we would in particular obtain the unweighted $L^p$ estimates for $g_*$ in the range $1<p<2$, which is well known to be false. 
The final part of the paper is devoted to the analytic applications of Theorem \ref{mainthm}.  In particular, we will give the proof of Theorem \ref{weights} there.  

\section{The proof of Theorem \ref{mainthm} for $p=3$}\label{p=3}

\subsection{On the method of proof} For the sake of clarity, we have decided to start with a description of the approach which will lead us to the desired estimate \eqref{mainin}. The detailed, quite elaborate calculations are postponed to the second part of this section.

Let us start with the following useful interpretation of $A_p$ weights, valid for $1<p<\infty$. Fix such a weight $W$ and suppose that $c\geq [W]_{A_p}$. Let $V=(V_t)_{t\geq 0}$ be the martingale given by $V_t=\E(W^{1/(1-p)}|\F_t)$, $t\geq 0$. Note that Jensen's inequality implies $W_t V_t^{p-1}\geq 1$ almost surely for any $t\geq 0$. Furthermore,  the $A_p$ condition is equivalent to the existence of a positive constant $c$ such that 
$$ W_t V_t^{p-1}\leq c,\qquad \mbox{with probability }1.$$
In other words, an $A_p$ weight of characteristic equal to $c$ gives rise to a two-dimensional martingale $(W,V)$ taking values in the domain
$$ \{(w,v)\in (0,\infty)\times (0,\infty)\,:\,1\leq wv^{p-1}\leq c\}.$$
In addition, this martingale terminates at the lower boundary of this domain: $W_\infty V_\infty^{p-1}=1$ almost surely. A nice fact is that this provides a full characterization of $A_p$ weights: given any martingale pair $(W,V)$ (with continuous-path $W$ of mean $1$) taking values in the above domain and terminating at the set $wv^{p-1}=1$, one easily checks that its first coordinate is an $A_p$ weight with $[W]_{A_p}\leq c$.

We are ready to describe the idea behind the proof of \eqref{mainin} for $p=3$. We want to establish the inequality
\begin{equation}\label{wewant}
 \E\bigg[ \bigg(\langle X\rangle_t^{3/2}- 64^3[W]_{A_3}^{3/2} |X_t|^3\bigg)W\bigg]\leq 0,\qquad t\geq 0,
\end{equation}
for any weight $W$ satisfying Muckenhoupt's condition $(A_3)$ and any continuous-path martingale $X=(X_t)_{t\geq 0}$. To this end, we will construct the Bellman function associated with this problem. For a more extensive description of the Bellman function method used here, see \cite{Os}.  Fixed $c\geq 1$ and consider the domain
$$ \mathcal{D}_c=\{(x,y,w,v):x\in \R,\,y\geq 0,\,1\leq wv^2\leq c\}.$$
Suppose that there is a $C^2$ function $B:\mathcal{D}_c\to \R$ satisfying the following properties:

\smallskip

1$^\circ$ (Initial condition) We have $B(x,x^2,w,v)\leq 0$ if $x\in \R$ and $1\leq wv^2\leq c$.

2$^\circ$ (Majorization property) There is a positive constant $\kappa>0$ such that
\begin{equation}\label{maj}
B(x,y,w,v)\geq \kappa(y^{3/2}w-64^3c^{3/2}|x|^3w)\qquad \mbox{for }(x,y,w,v)\in \mathcal{D}_c.
\end{equation}

3$^\circ$ (Concavity-type property) For any $(x,y,w,v)\in \mathcal{D}_c$ with $1< wv^2\leq c$ and any $d,\,r,\,s\in \R$, the function
$$ \xi_B(t):=B(x+td,y+t^2d^2,w+tr,v+ts),$$
given for those $t$, for which $1\leq (w+tr)(w+ts)^2\leq c$, satisfies $\xi_B''(0)\leq 0$.

\smallskip

The connection between the existence of such a function and the validity of \eqref{wewant} is described in the following lemma. 

\begin{lemma}
Let $c\geq 1$ be fixed. If $B$ satisfies the conditions 1$^\circ$, 2$^\circ$ and 3$^\circ$, then the inequality \eqref{wewant} holds true for all weights $W$ with $[W]_{A_3}\leq c$.
\end{lemma}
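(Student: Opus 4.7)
The plan is to mount a standard Itô--Bellman supermartingale argument: lift the problem to a four-dimensional semimartingale taking values in $\mathcal D_c$ along which $B$ becomes a supermartingale, then squeeze $B$ between the bounds in 1$^\circ$ and 2$^\circ$.

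\textbf{Lifting to $\mathcal D_c$.} I will introduce the auxiliary martingale $V_t:=\E(W^{-1/2}\mid\F_t)$ (for $p=3$ we have $-1/(p-1)=-1/2$), which is the continuous-path instance of the weight interpretation sketched in the subsection above. Jensen's inequality gives $W_tV_t^{2}\geq 1$, while $[W]_{A_3}\leq c$ gives $W_tV_t^{2}\leq c$, so $(W_t,V_t)$ evolves in $\{1\leq wv^{2}\leq c\}$. I also augment $\langle X\rangle_t$ to $Y_t:=X_0^2+\langle X\rangle_t$; this continuous non-decreasing process of finite variation satisfies $Y_0=X_0^2$ and $dY_t=d\langle X\rangle_t$, and $(X_t,Y_t,W_t,V_t)$ then takes values in $\mathcal D_c$.

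\textbf{Supermartingality.} I will apply Itô's formula to $N_t:=B(X_t,Y_t,W_t,V_t)$ (using that $B\in C^2$). Because $Y$ has finite variation, all of its brackets with $X$, $W$, $V$ and with itself vanish; the bounded-variation part of $dN_t$ therefore reduces to
\[
\Bigl[B_y+\tfrac12 B_{xx}\Bigr]\,d\langle X\rangle_t+B_{xw}\,d\langle X,W\rangle_t+B_{xv}\,d\langle X,V\rangle_t+\tfrac12 B_{ww}\,d\langle W\rangle_t+B_{wv}\,d\langle W,V\rangle_t+\tfrac12 B_{vv}\,d\langle V\rangle_t.
\]
By Kunita--Watanabe I may represent $X$, $W$, $V$ locally as stochastic integrals $\phi\cdot\beta$, $\psi\cdot\beta$, $\chi\cdot\beta$ against a common (possibly multi-dimensional) Brownian motion $\beta$; the integrand above then equals $\tfrac12\,\xi_B''(0)$ evaluated at $(d,r,s)=(\phi_t,\psi_t,\chi_t)$, which is $\leq 0$ by 3$^\circ$ wherever $1<W_tV_t^2\leq c$. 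Stopping at a localizing sequence $\tau_n$ that avoids the boundary $\{wv^2=1\}$ and forces the local-martingale part to be a true martingale turns $N_{t\wedge\tau_n}$ into a genuine supermartingale.

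\textbf{Closing the loop.} At $t=0$, $Y_0=X_0^2$ and $1\leq W_0V_0^2\leq c$, so 1$^\circ$ gives $N_0\leq 0$ a.s.; supermartingality then yields $\E N_{t\wedge\tau_n}\leq 0$. Since $W_t\geq 0$ and $Y_t\geq\langle X\rangle_t$, condition 2$^\circ$ gives pointwise
\[
N_t\;\geq\;\kappa\bigl(Y_t^{3/2}-64^3c^{3/2}|X_t|^3\bigr)W_t\;\geq\;\kappa\bigl(\langle X\rangle_t^{3/2}-64^3c^{3/2}|X_t|^3\bigr)W_t.
\]
Taking expectations, replacing $W_t$ by $W$ inside the expectation via $\F_t$-measurability of the bracketed quantity and the tower property, and passing $\tau_n\to\infty$ by dominated convergence (justified by the polynomial growth inherited from 2$^\circ$ together with $X\in L^3(W)$) gives
\[
\kappa\,\E\Bigl[\bigl(\langle X\rangle_t^{3/2}-64^3c^{3/2}|X_t|^3\bigr)W\Bigr]\leq 0,
\]
which, after division by $\kappa>0$ and recalling that $c$ can be taken equal to $[W]_{A_3}$, is precisely \eqref{wewant}.

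\textbf{Main obstacle.} The conceptual passage from pointwise concavity 3$^\circ$ to pathwise supermartingality is clean once one recognizes 3$^\circ$ as negative semidefiniteness of the appropriate Hessian-type matrix. The delicate points are purely technical: (i) the boundary $\{W_tV_t^2=1\}$, where 3$^\circ$ is not assumed---handled either by noting that this codimension-one set is visited on a Lebesgue-null set of times for non-degenerate continuous paths, or by a boundary mollification of $B$; and (ii) uniform integrability when removing the localization, which relies on the polynomial growth built into 2$^\circ$ together with the $L^p(W)$-boundedness hypothesis on $X$.
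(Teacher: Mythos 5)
Your proof follows essentially the same Itô--Bellman supermartingale argument as the paper: apply Itô's formula to $B(X_t,\langle X\rangle_t,W_t,V_t)$, kill the martingale part in expectation, use 3$^\circ$ to dispose of the finite-variation part, then squeeze with 1$^\circ$ and 2$^\circ$ and the tower property $W_t=\E(W\mid\F_t)$. Your write-up is somewhat more careful than the paper's (you shift to $Y_t=X_0^2+\langle X\rangle_t$ so that 1$^\circ$ applies at $t=0$, you localize and track uniform integrability, and you explicitly identify the drift integrand with $\tfrac12\xi_B''(0)$), but these are the same ideas spelled out; the one small imprecision is that the representation of $X,W,V$ as integrals against a common Brownian motion is the predictable representation theorem (or, more robustly, just the positive semidefiniteness of the covariation matrix $d\langle X,W,V\rangle$), not the Kunita--Watanabe inequality---this does not affect the validity of the argument.
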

\begin{proof}
The claim follows by a straightforward use of It\^o's formula. Since $B$ is of class $C^2$, we may write
$$ B(X_t,\langle X\rangle_t,W_t,V_t)=I_0+I_1+I_2+I_3/2,$$
where
\begin{align*}
I_0&=B(X_0,\langle X\rangle_0,W_0,V_0),\\
 I_1&=\int_{0+}^t B_x(X_s,\langle X\rangle_s,W_s,V_s)\mbox{d}X_s+\int_{0+}^t B_w(X_s,\langle X\rangle_s,W_s,V_s)\mbox{d}W_s\\
&\quad +\int_{0+}^t B_v(X_s,\langle X\rangle_s,W_s,V_s)\mbox{d}V_s,\\
I_2&=\int_{0+}^t B_y(X_s,\langle X\rangle_s,W_s,V_s)\mbox{d}\langle X\rangle_s,\\
I_3&=\int_{0+}^t D^2_{x,w,v}B(X_s,\langle X\rangle_s,W_s,V_s)\mbox{d}\langle X,W,V\rangle_s.
\end{align*}
Here in the definition of $I_3$ we have used a short notation for the sum of second-order terms: more formally, $I_3$ equals
$$\int_{0+}^t B_{xx}(X_s,\langle X\rangle_s,W_s,V_s)d\langle X\rangle_s+2\int_{0+}^t B_{xw}(X_s,\langle X\rangle_s,W_s,V_s)\mbox{d}\langle X,W\rangle_s+\ldots$$
and so on. Let us study the properties of the terms $I_0$ through $I_3$. We have $I_0\leq 0$ because of the condition 1$^\circ$. Furthermore, the expectation of $I_1$ is zero, by the properties of stochastic integrals. Finally, the condition 3$^\circ$ implies that $I_2+I_3\leq 0$. Consequently, we get
$$ \E B(X_t,\langle X\rangle_t,W_t,V_t)\leq 0.$$
It remains to apply the majorization \eqref{maj} and use the equality $W_t=\E(W|\F_t)$ to get the assertion.
\end{proof}

Actually, we will construct a function which will satisfy 1$^\circ$, 2$^\circ$ and a certain weaker form of 3$^\circ$ (in particular, $B$ will not be of class $C^2$). Let us discuss this issue briefly and explain why this will not alter the assertion of the above lemma.  Assume that $D_1$, $D_2$, $D_3$ are the ``angular'' subsets of $\mathcal{D}_c$ given by
\begin{equation}\label{defDi}
\begin{split}
D_1&=\left\{(x,y,w,v): y^{1/2}\geq 16c^{1/2}|x|(c/t)^{1-\beta}\right\},\\
D_2&=\left\{(x,y,w,v): 4|x|\leq y^{1/2}< 16c^{1/2}|x|(c/t)^{1-\beta}\right\},\\
D_3&=\left\{(x,y,w,v): y^{1/2}\leq 4|x|\right\},
\end{split}
\end{equation}
where $t=wv^2$. 
We will construct three functions $B_1,\,B_2,\,B_3:\mathcal{D}_c\to \R$ of class $C^2$ which satisfy the following set of requirements:

\begin{itemize}
\item[(a)] $B_i$ satisfies 3$^\circ$ for $(x,y,w,v)\in D_i$, $i=1,\,2,\,3$.
\item[(b)] $B_1\leq B_2$ on $D_1$, $B_2\leq \min(B_1,B_3)$ on $D_2$ and $B_3\leq B_2$ on $D_3$. 
\end{itemize}
Then the function
\begin{equation}\label{defB}
 B=B_i\quad \mbox{on }D_i,\quad i=1,\,2,\,3,
\end{equation}
is continuous and satisfies
$$ \E B(X_t,\langle X\rangle_t,W_t,V_t)\leq \E B(X_0,\langle X\rangle_0,W_0,V_0).$$
This can be proved, for example, by applying the more general  It\^o formula with local times on surfaces (see Peskir \cite{Pe}) or, alternatively, using standard mollification arguments. By 1$^\circ$ and 2$^\circ$, this will yield the validity of \eqref{wewant}.

Let us quickly comment on the conditions 1$^\circ$, 2$^\circ$ and 3$^\circ$. The first two properties are very easy, the main difficulty lies in the proof of 3$^\circ$. Roughly speaking, this condition means that for each $(x,y,w,v)$, a certain quadratic form of variables $d$, $r$ and $s$ is nonpositive. This then will lead us to the verification of the nonpositive-definiteness of the associated Hessian-type matrices of $B_1$, $B_2$ and $B_3$. As we shall see in \eqref{defBi} below, the functions $B_i$ will be built from several simpler ``blocks'', for which the study of the corresponding quadratic forms is simpler.

The formula \eqref{defBi} for the special functions looks very mysterious and hence, before we proceed to the formal verification, let us give the reader some intuition about the search for this object and the problems that arise. The reasoning below will be very informal and vague; the reader may skip it and proceed directly to the next subsection. As we have already mentioned above, the key lies in an appropriate handling of the concavity property 3$^\circ$. A natural starting point is the unweighted setting. In this case,  $W$ and $V$ are constant and the function $B$ depends on only two variables $x$ and $y$. The sharp $L^3$ estimate $||[X,X]_t^{1/2}||_3\leq \sqrt{3}||X_t||_3$ (cf. Davis \cite{Da}) can be obtained with the use of the function
$$ B(x,y)=y^{3/2}-3x^2y^{1/2},\qquad x\in \R,\,y\geq 0,$$
which is built from two blocks: the ``convex''  part $y^{3/2}$ and ``concave'' part $-x^2y^{1/2}$ multiplied by $3$. This function suggests considering
$$ B(x,y,w,v)=y^{3/2}w -Kx^2y^{1/2}w$$
in the weighted case, for some positive constant $K$ possibly depending on $c$. A direct verification of 3$^\circ$ brings us to the derivative
$$ \xi_B''(0)=(3-2K-Kx^2y^{-1})y^{1/2}wd^2-4Kxy^{1/2}dr,$$
which, unfortunately, can be positive.  This is due to the appearance of the summand $dr$ and the lack of a term of the form $ar^2$, for some $a<0$. To handle this difficulty, we will correct slightly the term $y^{3/2}w$ in the definition of $B$. Replacing it with $y^{3/2}F(w,v)$ for an appropriately chosen function $F$ makes $\xi_B''(0)$ behave like (the expressions are slightly different, the formal calculations are postponed to the next subsection)
$$ (3-2K-Kx^2y^{-1})y^{1/2}wd^2-4Kxy^{1/2}dr-c^{-1}y^{3/2}w^{-1}r^2,$$
i.e., the additional term $-c^{-1}y^{3/2}w^{-1}r^2$ emerges. So, if we take $K=2$ and restrict ourselves to $y^{1/2}\geq C|x|$ (for appropriately chosen $C$), then $B$ will have the required concavity. To ensure 3$^\circ$ for $y^{1/2}\leq C|x|$, we need to add a different term which will ``overpower'' the summand $dr$. To this end, we will use a term of the form $L_1x^2y^{1/2}w^{1-\beta}v^{-2\beta}-L_2|x|^3/v^2$, for certain $L_1,\,L_2>0$ and $\beta\in (0,1)$. Note that the function $w\mapsto w^{1-\beta}$ is concave, which will guarantee the appearance of the term $ar^2$ (for some negative $a$) in $\xi_B''(0)$. The ``correction'' $-L_2|x|^3/v^2$ is necessary to control other terms which arise. However, the function we thus obtain does not satisfy 3$^\circ$ if $y^{1/2}\leq 4|x|$ (i.e., $y^{1/2}$ is of comparable size to $x$ or smaller). To handle this, we will introduce  yet another formula for such $x$ and $y$, of the form 
$$ B(x,y,w,v)=y^{3/2}C_1(w,v)-|x|^{3/2}C_2(w,v).$$
If we take all the coefficients appropriately, then the resulting function $B$ is continuous and satisfies the structural properties (a) and (b) above. Although the final formula for $B$ will involve some additional terms, the above discussion illustrates quite well the steps of construction that lead us to the desired object.

\subsection{A special function} Throughout this subsection, $c\geq 1$ is a fixed parameter and, for notational convenience and brevity, we set $a=1/2$, $\alpha=1-(4c)^{-1}$, $\beta=3/4$. Recall the sets $D_1$, $D_2$ and $D_3$ given in \eqref{defDi}. In what follows, if $w,\,v$ are given positive numbers, then $t$ will stand for the product $wv^2$.

Define the functions $b_i$ by
\begin{align*}
 b_1(x,y,w,v)&=y^{3/2}(wv^2-a)^\alpha v^{-2},\\
b_2(x,y,w,v)&=c^\beta x^2y^{1/2}w^{1-\beta}v^{-2\beta}-80c^{3/2}|x|^3v^{-2},\\
b_3(x,y,w,v)&=64c^{3/2}|x|^3v^{-2},\\
b_4(x,y,w,v)&=c^{3/2}|x|^3v^{-2}\big(6-4(wv^{2})^{-1/2}-c^{-1/2}\ln(wv^2)\big)^{-2},\\
 b_5(x,y,w,v)&=c^\beta y^{3/2}w^{1-\beta}v^{-2\beta}.
 \end{align*}
The Bellman function $B$ is given by \eqref{defB}, where (for brevity, we skip the evaluation of the functions at the point $(x,y,w,v)$)
\begin{equation}\label{defBi}
 \begin{split}
B_1&=b_1- 2x^2y^{1/2}w-192b_3-276480b_4,\\
B_2&=b_1- 2x^2y^{1/2}w+192b_2-276480b_4,\\
B_3&=b_1-y^{3/2}w/8-240b_3-276480b_4+12b_5.
\end{split}
\end{equation}

We start with some preliminary properties of $B$.

\begin{lemma}
The function $B$ satisfies the properties 1$^\circ$, 2$^\circ$ and (b). 
\end{lemma}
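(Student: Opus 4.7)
My plan is to verify (b), 1$^\circ$, and 2$^\circ$ in that order, since the matching relations (b) expose algebraic identities that drive the other two. For (b), I would compute $B_{2}-B_{1}$ and $B_{2}-B_{3}$ directly. The $b_{1}$ and $-276480\,b_{4}$ contributions cancel in both differences. Writing $t=wv^{2}$ and using $16c^{3/4}w^{-1/4}v^{-1/2}=16c^{1/2}(c/t)^{1/4}$,
\[
B_{2}-B_{1}=192(b_{2}+b_{3})=192\,c^{3/4}x^{2}w^{1/4}v^{-3/2}\bigl[y^{1/2}-16c^{1/2}|x|(c/t)^{1/4}\bigr],
\]
whose sign is precisely the defining inequality of $D_{1}$. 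For $B_{2}-B_{3}$, the cubic-in-$|x|$ pieces cancel via $192\cdot 80=240\cdot 64$, and with $s=(c/t)^{3/4}$ the remainder equals $wy^{1/2}[2x^{2}(96s-1)-y(12s-1/8)]$; the identity $96s-1=8(12s-1/8)$ then factors this as $wy^{1/2}(12s-\tfrac{1}{8})(16x^{2}-y)$. Since $s\geq 1$ makes the middle factor positive, the sign is that of $16x^{2}-y$: nonnegative on $D_{3}$, nonpositive on $D_{1}\cup D_{2}$, which yields (b).

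For 1$^\circ$, the point $(x,x^{2},w,v)$ lies in $D_{3}$ (as $y^{1/2}=|x|\leq 4|x|$), so it suffices to show $B_{3}(x,x^{2},w,v)\leq 0$. Factoring out $|x|^{3}w$ and substituting $v^{-2}=w/t$, $w^{1/4}v^{-3/2}=w/t^{3/4}$, the claim reduces to
\[
\frac{(t-\tfrac{1}{2})^{\alpha}}{t}+\frac{12c^{3/4}}{t^{3/4}}-\frac{1}{8}\leq \frac{c^{3/2}}{t}\biggl[15360+\frac{276480}{\bigl(6-4t^{-1/2}-c^{-1/2}\ln t\bigr)^{2}}\biggr]
\]
for $t\in[1,c]$. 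Dropping the nonnegative second summand on the right and using $(t-\tfrac{1}{2})^{\alpha}/t\leq 1$, it suffices to check $t+12c^{3/4}t^{1/4}\leq 15360\,c^{3/2}$, which holds since $t+12c^{3/4}t^{1/4}\leq 13c\leq 15360\,c^{3/2}$ on $[1,c]$ for $c\geq 1$.

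For 2$^\circ$ I take $\kappa=1/3$. The engine is $b_{1}\geq y^{3/2}w/2$: writing $b_{1}=y^{3/2}w\cdot(t-\tfrac{1}{2})^{\alpha}/t$, the derivative of $(t-\tfrac{1}{2})^{\alpha}/t$ has numerator $(\alpha-1)t+\tfrac{1}{2}$, nonnegative on $[1,2c]\supset[1,c]$, so the ratio is minimized at $t=1$ with value $(1/2)^{\alpha}\geq 1/2$. Combined with $b_{4}\leq c^{3/2}|x|^{3}v^{-2}/4$ (the denominator $6-4t^{-1/2}-c^{-1/2}\ln t$ is increasing in $t$ on $[1,c]$ with minimum $2$ at $t=1$) and $v^{-2}\leq w$, this yields $192 b_{3}+276480 b_{4}\leq 81408\,c^{3/2}|x|^{3}w$. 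Region by region: on $D_{1}$, $y^{1/2}\geq 16c^{1/2}|x|$ gives $2x^{2}y^{1/2}w\leq y^{3/2}w/(128c)$, so $B_{1}\geq\tfrac{63}{128}y^{3/2}w-81408\,c^{3/2}|x|^{3}w\geq\tfrac{1}{3}(y^{3/2}w-64^{3}c^{3/2}|x|^{3}w)$ because $\tfrac{63}{128}>\tfrac{1}{3}$ and $81408<64^{3}/3\approx 87381$. On $D_{2}$, the nonnegative correction $2wx^{2}y^{1/2}(96(c/t)^{3/4}-1)$ hidden inside $192b_{2}-2x^{2}y^{1/2}w$ is discarded, reducing to essentially the same cross-bound. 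On $D_{3}$, $12b_{5}\geq 0$ is dropped and $b_{1}-y^{3/2}w/8\geq\tfrac{3}{8}y^{3/2}w$ closes the check by the same comparison.

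The main obstacle is the numerical calibration in the last step: the coefficients in $B_{1},B_{2},B_{3}$ must simultaneously satisfy the identities $192\cdot 80=240\cdot 64$ and $96s-1=8(12s-1/8)$ forced by (b) while keeping the cross-bound constant strictly below $64^{3}/3\approx 87381$ in every region, a tight margin ($81408$ on $D_{1}$, $84480$ on $D_{2}\cup D_{3}$) that barely admits a single $\kappa$ near $1/3$.
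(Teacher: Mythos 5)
Your proof is correct and follows essentially the same route as the paper's: verify (b) by algebraic cancellation of the cubic terms, then 1$^\circ$ by reducing to a one-variable inequality in $t=wv^2$ on the set $D_3$, and 2$^\circ$ from the engine $b_1\geq y^{3/2}w/2$ together with the denominator bound $6-4t^{-1/2}-c^{-1/2}\ln t\geq 2$. Your factorizations $B_2-B_1=192\,c^{3/4}x^2w^{1/4}v^{-3/2}\bigl(y^{1/2}-16c^{1/2}|x|(c/t)^{1/4}\bigr)$ and $B_2-B_3=wy^{1/2}(12(c/t)^{3/4}-\tfrac18)(16x^2-y)$ are a cleaner presentation than the paper gives, but encode the same identities. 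The one genuine (minor) divergence is in 2$^\circ$: the paper bounds $-2x^2y^{1/2}w$ uniformly via a weighted Young inequality, absorbing $\tfrac{32}{3}|x|^3w$ into the cubic budget and obtaining a single region-independent chain; you instead exploit the region constraints ($y^{1/2}\geq 16c^{1/2}|x|$ on $D_1$, the sign of $96(c/t)^{3/4}-1$ on $D_2$, the absence of that term on $D_3$) to avoid Young's inequality altogether. Both land at $\kappa=1/3$ with margins $81408$ and $84480$ against $64^3/3\approx 87381$; the paper's version is slightly tighter in needing the constant $253472<262144$ after multiplying by $3$, yours trades that for handling three cases, but the numerics close either way.
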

\begin{proof}
We start with the property (b). If $(x,y,w,v)\in D_1$, then
$$b_2(x,y,w,v)\geq 16c^{3/2}|x|^3v^{-2}-80c^{3/2}|x|^3v^{-2}=-b_3(w,y,w,v),$$
and hence $B_2\geq B_1$. If $(x,y,w,v)\in D_2$, then the above inequality reverses and hence $B_2\leq B_1$. Furthermore, on $D_2$ we have
\begin{align*}
 -2x^2y^{1/2}w+192b_2(x,y,w,v)&=\left(-2+192\left(\frac{c}{t}\right)^\beta\right)x^2y^{1/2}w-240b_3(x,y,w,v)\\
 &\leq \left(-2+192\left(\frac{c}{t}\right)^\beta\right)\frac{y^{3/2}}{16}w-240b_3(x,y,w,v),
 \end{align*}
which is equivalent to $B_2\leq B_3$. Finally, on $D_3$, the above estimate is reversed and hence (b) holds.

Next, we turn our attention to the initial condition 1$^\circ$. Any point of the form $(x,x^2,w,v)$ belongs to $D_3$, which implies the equality $B(x,x^2,w,v)=B_3(x,x^2,w,v)$. We have $(wv^2-a)^\alpha v^{-2}\leq w$, so $b_1\leq b_5$ and 1$^\circ$ follows from
\begin{align*}
 B_3(x,x^2,w,v)&\leq 13b_5(x,x^2,w,v)-240b_3(x,x^2,w,v)\\
&\leq \left(13\left(\frac{c}{t}\right)^\beta -240\cdot 64\sqrt{c}\cdot\frac{c}{t}\right)|x|^3w\leq 0.
\end{align*}
To verify the majorization 2$^\circ$, we start with the observation that $(wv^2-a)^\alpha v^{-2}\geq w/2$. Indeed, multiplying by $v^2$ and putting all the terms on the left transforms the estimate into $(t-a)^\alpha-t/2\geq 0$; now, the left hand side, as a function of $t\in [1,c]$, is increasing and positive at $t=1$. Therefore, we see that $ b_1(x,y,w,v)\geq y^{3/2}w/2$ and hence, using Young's inequality,
$$ b_1(x,y,w,v)-2x^2y^{1/2}w\geq y^3w/2-\big(y^3w/12+32|x|^3w/3\big)\geq \frac{1}{3}(y^{3/2}w-32|x|^3w).$$
Observe that 
$$192b_2(x,y,w,v)\geq -192\cdot 80c^{3/2}|x|^3v^{-2}=-240b_3(x,y,w,v)\geq -240\cdot 64c^{3/2}|x|^3w,$$ 
$$b_4(x,y,w,v)\leq c^{3/2}|x|^3w/4,$$ since $6-4t^{-1/2}-c^{-1/2}\ln t\geq 2$ for any $t\in [1,c],$
and $$12b_5(x,y,w,v)\geq y^{3/2}w/8.$$ Combining all these estimates gives
\begin{align*}
 B(x,y,w,v)&\geq \frac{1}{3}(y^{3/2}w-32x^3w)-240\cdot 64c^{3/2}|x|^3w-276480c^{3/2}|x|^3w/4\\
 &\geq \frac{1}{3}\left(y^{3/2}w-253472c^{3/2}|x|^3w\right)\\
 &\geq \frac{1}{3}\left(y^{3/2}w-(64c^{1/2})^3|x|^3w\right),
\end{align*}
which is the desired majorization.
\end{proof}

It remains to verify that the function $B$ enjoys the concavity property (a). This will be done in the sequence of four lemmas below.

\begin{lemma}\label{lem1}
(i) Let $\varphi(t)=(t-a)^\alpha$ for $t\geq a$ and let $F(w,v)=\varphi(wv^2)v^{-2}$. Then for any $w,v>0$ satisfying $1\leq wv^2\leq c$ we have
$$ D^2F(w,v)\leq \left[\begin{array}{cc}
v^2\varphi''(wv^2)/2 & 0 \\ 0 & 0 
\end{array}\right],$$
where $D^2F$ stands for the Hessian matrix of $F$.

(ii) If $b(x,y,w,v):=b_1(x,y,w,v)-2x^2y^{1/2}w,$ then $\xi_{b}''(0)\leq 0$ on $D_1$.
\end{lemma}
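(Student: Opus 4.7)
The plan is to establish the two parts in sequence, with part (i) providing the input for part (ii).

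For part (i), my approach is to compute the four entries of $D^2F$ directly. Writing $t = wv^2$, a short calculation gives $F_{ww} = v^2\varphi''(t)$, $F_{wv} = 2wv\varphi''(t)$, and $F_{vv} = -6wv^{-2}\varphi'(t) + 4w^2\varphi''(t) + 6v^{-4}\varphi(t)$. The asserted matrix inequality is equivalent to the positive semidefiniteness of
$$M = \begin{pmatrix} -v^2\varphi''(t)/2 & -2wv\varphi''(t) \\ -2wv\varphi''(t) & -F_{vv}\end{pmatrix},$$
whose $(1,1)$ entry is nonnegative because $\alpha \in (0,1)$ makes $\varphi$ concave. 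The heart of the matter is the determinant condition $M_{11}M_{22}\geq M_{12}^2$. Multiplying through by $v^4$ and using $w^2v^4=t^2$, this reduces to the one-variable inequality
$$6t\varphi'(t) + 4t^2\varphi''(t) - 6\varphi(t) \geq 0 \qquad \text{for } t\in[1,c].$$
Substituting $\varphi(t)=(t-a)^\alpha$ and dividing by $(t-a)^{\alpha-2}$, I obtain a quadratic in $t$ whose leading coefficient $2(\alpha-1)(2\alpha+3)$ is negative, so the quadratic is concave. Hence it suffices to verify the inequality at the two endpoints $t=1$ and $t=c$; direct substitution using $a=1/2$, $\alpha=1-1/(4c)$ makes both hold (with near-equality).

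For part (ii), I expand $\xi_b''(0)$ via the chain rule applied to the curve $(x+td,y+t^2d^2,w+tr,v+ts)$, whose first derivative at $0$ is $(d,0,r,s)$ and whose second derivative is $(0,2d^2,0,0)$, yielding
$$\xi_b''(0) = d^2 b_{xx} + 2dr\,b_{xw} + 2ds\,b_{xv} + r^2 b_{ww} + 2rs\,b_{wv} + s^2 b_{vv} + 2d^2 b_y.$$
Computing the partials of $b = y^{3/2}F - 2x^2y^{1/2}w$ is routine; the crucial observation is that $r^2 b_{ww} + 2rs\,b_{wv} + s^2 b_{vv}$ equals $y^{3/2}$ times the quadratic form $D^2F[(r,s),(r,s)]$, which by part (i) is bounded by $\tfrac12 y^{3/2}v^2\varphi''(t)r^2$. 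Substituting this bound and using $F\leq w$ (which follows from $(t-a)^\alpha \leq t$ when $t\geq 1$ and $\alpha\leq 1$), the estimate becomes $\xi_b''(0) \leq Ad^2 + 2Cdr + Br^2$, with $A = -y^{1/2}w(1+2x^2/y)$, $C = -4xy^{1/2}$, and $B = \tfrac12 y^{3/2}v^2\varphi''(t)$. Since $A,B<0$, nonpositivity of this form is equivalent to the discriminant bound $AB \geq C^2$, which simplifies (using $-\varphi''(t) = \tfrac{\alpha}{4c}(t-a)^{\alpha-2}$ and $(t-a)^{\alpha-2}\geq t^{\alpha-2}$) to $y \geq 128 c x^2 t^{1-\alpha}/\alpha$. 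The defining constraint $y \geq 256 c x^2 (c/t)^{1/2}$ of $D_1$ then reduces the whole task to $2\alpha c^{1/2} \geq c^{3/2-\alpha}$, i.e.\ $2\alpha \geq c^{1/(4c)}$, which holds since the right side is bounded above by $e^{1/(4e)} < 1.1$ while $2\alpha \geq 3/2$ for $c\geq 1$.

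The main obstacle I anticipate is the tightness of the one-variable inequality in part (i): it is essentially an equality at $t=1$ (as $c\to 1$) and degrades to $(1-c)/2$ at $t=c$, so the specific values $a=1/2$, $\alpha=1-1/(4c)$ are forced by the need to satisfy it throughout $[1,c]$. These choices in turn fix the exponent $\beta = 3/4$ and the threshold $16c^{1/2}(c/t)^{1-\beta}$ in the definition of $D_1$, and the discriminant computation in part (ii) then threads the numerical needle quite narrowly. Once part (i) is in hand, part (ii) is mostly careful bookkeeping of constants rather than a conceptual difficulty.
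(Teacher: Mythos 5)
Your argument is essentially correct and follows the same global structure as the paper's proof: for part (i) you reduce to the determinant condition of the $2\times 2$ matrix, multiply by $v^4$ and divide by $(t-a)^{\alpha-2}$, arriving at the same one-variable inequality $6t\varphi'(t)+4t^2\varphi''(t)-6\varphi(t)\ge 0$ (equivalently, the quadratic $2(\alpha-1)(2\alpha+3)t^2+6a(2-\alpha)t-6a^2\ge 0$); for part (ii) you expand $\xi_b''(0)$ by the chain rule, plug in the bound from (i) and $F\le w$, and reduce to a discriminant inequality that you then verify using the defining inequality of $D_1$. The one place where you diverge from the paper is the closing step of (i): you observe the quadratic in $t$ has negative leading coefficient and therefore propose to check only $t=1$ and $t=c$, whereas the paper observes that the expression is \emph{increasing in $c$} on $[t,\infty)$ and so only the single endpoint $c=t$ needs checking (reducing there to $t(t-1)\ge 0$). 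Your endpoint idea is perfectly valid and even a bit more mechanical, but you never actually carry out the two substitutions, only assert that they ``hold with near-equality.'' This is a genuine, if small, gap: the verification is not entirely trivial. For completeness, at $t=1$ the quadratic evaluates to $4\alpha^2-\alpha-\tfrac32=4\bigl(\alpha-\tfrac34\bigr)\bigl(\alpha+\tfrac12\bigr)\ge 0$ (nonnegative since $\alpha=1-\tfrac1{4c}\ge\tfrac34$), and at $t=c$ a short computation using $\alpha=(4c-1)/(4c)$ collapses the leading-coefficient term to $(1-10c)/4$ and the whole expression to $(c-1)/2\ge 0$. Including these two lines would close the argument. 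Your part (ii) is a correct rendering of the paper's argument; the only cosmetic difference is that you retain the harmless factor $(1+2x^2/y)$ in $A$ and then drop it (bounded below by $1$) in the discriminant step, whereas the paper discards the term $-2x^2y^{-1/2}wd^2$ at the outset, and you use $(t-a)^{\alpha-2}\ge t^{\alpha-2}$ directly rather than the paper's chained bound $t(t-a)^{\alpha-2}\ge t^{\alpha-1}\ge c^{-1/(4c)}\ge 2/3$. Both routes give the same conclusion, and your final numerical reduction $2\alpha\ge c^{1/(4c)}$, with $2\alpha\ge\tfrac32$ versus $\sup_{c\ge1}c^{1/(4c)}=e^{1/(4e)}<1.1$, is correct.
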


\begin{rem}\label{rem1}
This lemma handles the property 3$^\circ$ of the Bellman function $B$ on the domain $D_1$. Indeed, the additional summands $-100b_3$ and $-192\cdot 128 b_5$ appearing in the definition of $B$ on $D_1$ are concave functions of $x$, $w$ and $v$ (see Lemma \ref{lem2} below) and hence do not affect this property. We need these summands only for the sake of the structural property (b).
\end{rem}
\begin{proof}[Proof of Lemma \ref{lem1}]
(i) We recall that $t=wv^2$. The claim is equivalent to
$$ \left[\begin{array}{cc}
v^2\varphi''(t)/2 & 2wv \varphi''(t)\\
2wv\varphi''(t) & 4w^2\varphi''(t)+6v^{-4}\big(\varphi(t)-t\varphi'(t)\big)
\end{array}\right]\leq 0.$$
 Clearly, $\varphi$ is a concave function, so it is enough to prove that
$$ 6v^{-4}\big(\varphi(t)-t\varphi'(t)\big)\leq 4w^2\varphi''(t),$$
since then the determinant of the above matrix will be nonnegative. The inequality can be rewritten in the form
$ 3(t-a)(a+t(\alpha-1))\geq 2\alpha(1-\alpha)t$, or equivalently,
$$ 3\left(t-\frac{1}{2}\right)\left(c-\frac{t}{2}\right)-\left(1-\frac{1}{4c}\right)t\geq 0.$$
The left-hand side is an increasing function of $c\in [t,\infty)$, so it is enough to check the estimate for $c=t$. But then it is equivalent to the trivial bound $(6t-1)(t-1)\geq 0$.

(ii) We have
\begin{align*}
\xi_{b}''(0)=&\,y^{3/2}\big\langle D^2F(w,v)(r,s),(r,s)\big\rangle\\
&+3y^{1/2}F(w,v)d^2-4y^{1/2}wd^2-8xy^{1/2}dr-2x^2y^{-1/2}wd^2,
\end{align*}
where $\langle\cdot,\cdot \rangle$ denotes the standard scalar product. 
By the inequalities $-2x^2y^{-1/2}wd^2\leq 0$, $F(w,v)\leq w$ and part (i) of the lemma, we get
\begin{equation}\label{weget}
 \xi_{b}''(0)\leq \frac{1}{2}\alpha(\alpha-1)y^{3/2}(wv^2-a)^{\alpha-2}v^2r^2-y^{1/2}wd^2-8xy^{1/2}dr.
\end{equation}
Hence, to show that $\xi_{b}''(0)\leq 0$, it suffices to prove that the discriminant of the right-hand side above, treated as a function of $d$, is nonpositive:
$$ 64x^2yr^2+4\cdot \frac{1}{2}\alpha(\alpha-1)y^2(wv^2-a)^{\alpha-2}wv^2r^2\leq 0,$$
or, equivalently, 
$$ 128 cx^2\leq \left(1-\frac{1}{4c}\right)t(t-a)^{\alpha-2}y.$$
But $1-(4c)^{-1}\geq 3/4$ and $t(t-a)^{\alpha-2}\geq (t-a)^{\alpha-1}\geq t^{\alpha-1}\geq c^{-1/(4c)}\geq 2/3$. Since $y\geq 256cx^2$ (guaranteed by the assumption $(x,y,w,v)\in D_1$), we are done.
\end{proof}

The most technical part is the analysis of $b_2$. Here is the precise statement.

\begin{lemma}\label{lem3}
If $4 |x|\leq y^{1/2}\leq 16(c/t)^{1-\beta}\sqrt{c}|x|$ and $|\gamma|\leq 1/24$, then
$$
 \xi_{b_2}''(0)\leq \gamma \left(\frac{c}{t}\right)^\beta xy^{1/2} dr.
$$
\end{lemma}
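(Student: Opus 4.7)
My approach is to reduce the claim to the negative semidefiniteness of a certain $3 \times 3$ symmetric matrix and then verify this by direct computation. Write $\rho := (c/t)^\beta = (c/t)^{3/4}$. The stated inequality, which must hold for every $(d,r,s) \in \R^3$, is equivalent to non-positivity of the quadratic form $Q(d,r,s) := \xi_{b_2}''(0) - \gamma \rho xy^{1/2}\, dr$. Since $b_2$ depends on $x$ only through $x^2$ and $|x|^3$, the symmetry $(x,d) \mapsto (-x,-d)$ lets me assume $x > 0$.

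First I would compute $Q$ explicitly. Expanding via the product rule along the curve $(x+\tau d, y+\tau^2 d^2, w+\tau r, v+\tau s)$ and simplifying via the identity $c^\beta w^{1-\beta} v^{-2\beta} = \rho w$, one obtains $[r^2] = -\tfrac{3}{16}\rho x^2 y^{1/2} w^{-1}$, $[d^2] = \rho w(2y^{1/2} + x^2 y^{-1/2}) - 480 c^{3/2} x v^{-2}$, and $[s^2] = \tfrac{15}{4}\rho w x^2 y^{1/2} v^{-2} - 480 c^{3/2} x^3 v^{-4}$, together with the cross-term coefficients $[dr] = (1-\gamma)\rho xy^{1/2}$, $[ds] = -6\rho w xy^{1/2} v^{-1} + 960 c^{3/2} x^2 v^{-3}$, and $[rs] = -\tfrac{3}{4}\rho x^2 y^{1/2} v^{-1}$. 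The negativity of $[r^2]$ is immediate. For $[d^2]$ and $[s^2]$ I would use the defining constraints of $D_2$: the bound $4x \leq y^{1/2}$ gives $2y^{1/2} + x^2 y^{-1/2} \leq \tfrac{33}{16} y^{1/2}$, while the bound $y^{1/2} \leq 16(c/t)^{1/4} c^{1/2} x$ combined with the identity $\rho w = c^{3/4} w^{1/4} v^{-3/2}$ yields $480 c^{3/2} x v^{-2} \geq 30\, \rho w\, y^{1/2}$. Together these force $[d^2] \leq -\tfrac{447}{16}\rho w\, y^{1/2}$, and an analogous computation produces $[s^2] \leq -420\, c^{3/2} x^3 v^{-4}$.

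The final step is to verify negative semidefiniteness of the full $3 \times 3$ matrix. I would do this by taking the Schur complement with respect to the $[r^2]$ pivot, equivalently by completing the square in $r$, reducing the problem to non-positivity of a $2 \times 2$ form $\widetilde Q(d,s)$. Because $|\gamma| \leq \tfrac{1}{24}$ implies $(1-\gamma)^2 \leq (25/24)^2$, the correction $[dr]^2/(4\,|[r^2]|) = \tfrac{4}{3}(1-\gamma)^2 \rho w\, y^{1/2}$ to the $d^2$ coefficient is much smaller than the available slack $\tfrac{447}{16}\rho w\, y^{1/2}$, so the reduced diagonals $\widetilde{[d^2]}$ and $\widetilde{[s^2]}$ remain strictly negative. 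It then suffices to verify the determinant inequality $4\widetilde{[d^2]}\,\widetilde{[s^2]} \geq \widetilde{[ds]}^{\,2}$. I expect this to be the main technical obstacle: naive worst-case bounds on each entry separately are insufficient, and one must preserve the exact polynomial form of the coefficients so that the cancellation between the $-480 c^{3/2} x^3 v^{-4}$ contribution to $\widetilde{[s^2]}$ and the square of the $960 c^{3/2} x^2 v^{-3}$ contribution to $\widetilde{[ds]}$ can be exploited. Introducing the two dimensionless parameters $u := \rho w xy^{1/2} v^{-1}$ and $z := c^{3/2} x^2 v^{-3}$ (linked by $u \leq 16 z$ under the constraint), the determinant inequality becomes a polynomial inequality in $u/z \in [0,16]$ that I would verify by elementary quadratic analysis, with the slack $|\gamma| \leq 1/24$ providing precisely the room needed to close the estimate.
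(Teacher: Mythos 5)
Your computation of the Hessian is correct (I spot-checked every coefficient, including the identity $c^\beta w^{1-\beta}v^{-2\beta}=\rho w$ with $\rho=(c/t)^{3/4}$), and the two $D_2$ bounds $x^2/y\leq 1/16$ and $\rho w y^{1/2}v^2\leq 16\,c^{3/2}x$ are exactly the inequalities the paper uses. So the overall approach is essentially the paper's: reduce to non-positive definiteness of a $3\times 3$ form and verify it using the $D_2$ constraints and $|\gamma|\leq 1/24$. The difference is mainly how the definiteness check is organized. The paper replaces the penalty matrix $A_2$ (coming from $-80c^{3/2}|x|^3v^{-2}$) by the $\nu$-independent lower bound $(c/t)^\beta y^{1/2}A_3$, where $\nu:=\rho w y^{1/2}v^2/(c^{3/2}x)\in(0,16]$, thereby proving a \emph{stronger} statement in which $\nu$ is frozen at its worst value $16$; after row/column normalization this leaves a $3\times 3$ matrix in the two variables $x^2/y$ and $\gamma$ whose definiteness is checked by Sylvester and Sarrus. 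You instead keep $\nu$ as a free variable and pivot on $[r^2]$ by a Schur complement, arriving at the $2\times 2$ determinant inequality $4\widetilde{[d^2]}\,\widetilde{[s^2]}\geq\widetilde{[ds]}^{\,2}$. That inequality does close: after pulling out the common factor $c^3x^4v^{-6}$ and dividing by $\nu>0$, it becomes affine in $\nu$, so it suffices to check $\nu\to 0$ (which reduces to $6-2\gamma-\mu-\tfrac{4}{3}(1-\gamma)^2\geq \tfrac92$, worst case $1976/432>9/2$) and $\nu=16$ (which reduces to $(B-60)^2+102A\leq 3060$ with $A=2+\mu+\tfrac43(1-\gamma)^2$, $B=8-2\gamma$, worst case $\approx 3053.3$). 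Two small caveats: the final polynomial depends on \emph{three} parameters $(\nu,\mu,\gamma)$, not only $u/z$, although $\mu$ and $\gamma$ are handled by monotonicity; and you leave this last verification as an assertion, whereas it is where the numerical constants in $b_2$ and the bound $|\gamma|\leq 1/24$ enter essentially, so it should be written out.
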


\begin{rem}\label{rem2}
This lemma gives the property 3$^\circ$ of the function $B$ on $D_2$. Indeed, it implies that $\xi_{b_2}''(0)$ (when multiplied by the factor $192$) ``overpowers'' the problematic term $-8xy^{1/2}dr$ arising from the part $b_1-2x^2y^{1/2}w$: see \eqref{weget} above. As in Remark \ref{rem1} above, the summand $-100b_3$ in the definition of $B$ on  $D_2$ does not affect the concavity. 
\end{rem}
\begin{proof}[Proof of Lemma \ref{lem3}]
We compute that
$$ \xi_{b_2}''(0)=\left(\frac{c}{t}\right)^\beta y^{1/2}\bigg\langle A_1(d,r,s),(d,r,s)\bigg\rangle-\bigg\langle A_2(d,r,s),(d,r,s)\bigg\rangle,$$
where
$$ A_1=\left[\begin{array}{ccc}
2w+x^2y^{-1}w & 2(1-\beta)x & -4\beta xwv^{-1}\\
2(1-\beta)x & \beta(\beta-1)x^2w^{-1} & 2\beta(\beta-1)x^2v^{-1}\\
-4\beta xwv^{-1} & 2\beta(\beta-1)x^2v^{-1} & 2\beta(2\beta+1)x^2wv^{-2}
\end{array}\right]$$
and
$$ A_2=80c^{3/2}|x|\left[\begin{array}{ccc}
6v^{-2} & 0 & -6xv^{-3}\\
0 & 0 & 0\\
-6xv^{-3} & 0 & 6x^2v^{-4}
\end{array}\right].$$
It is easy to see that the matrix $A_2$ is nonnegative-definite. Consequently, since $16c^{3/2}|x|\geq (c/t)^\beta y^{1/2}t$, we see that
$$ A_2\geq \left(\frac{c}{t}\right)^\beta y^{1/2} A_3:=\left(\frac{c}{t}\right)^\beta y^{1/2}\left[\begin{array}{ccc}
30 w & 0 & -30 xwv^{-1}\\
0 & 0 & 0\\
-30 xwv^{-1} & 0 & 30 x^2wv^{-2}
\end{array}\right].$$
Therefore, the claim will be proved if we show that the matrix
$$ A_1-A_3+\left[\begin{array}{ccc}
0 & -\gamma x & 0\\ -\gamma x & 0 & 0 \\ 0 & 0 & 0
\end{array}\right]$$
is nonpositive-definite. Performing standard operations on rows and columns of this matrix, we may get rid of almost all variables $x$, $y$, $w$ and $v$, obtaining
$$ \left[\begin{array}{ccc}
x^2y^{-1}-28 & -\gamma+1/2 & -27\\
-\gamma+1/2 & -3/16 & -3/8\\
-27 & -3/8 & -30+15/4
\end{array}\right].$$
To check that this matrix is nonpositive-definite, we use Sylvester's criterion. We have $-30+15/4<0$ and
$$ \operatorname*{det}\left[\begin{array}{cc}
-3/16 & -3/8\\
-3/8 & -30+15/4
\end{array}\right]>0,$$
so it remains to check that the determinant of the full matrix is nonpositive. Let us add the first column of this matrix to the last column, and then the first row to the third row. The determinant does not change after these operations and hence it is equal to
$$ \operatorname*{det}\left[\begin{array}{ccc}
x^2y^{-1}-28 & -\gamma+1/2 & -1+x^2y^{-1}\\
-\gamma+1/2 & -3/16 & -\gamma+1/8\\
-1+x^2y^{-1} & -\gamma+1/8 & x^2y^{-1}-1/4
\end{array}\right].$$
We compute this determinant using Sarrus' rule, expanding it into a sum of six products. Let us group these products appropriately. Observe that
$$ 2(-1+x^2y^{-1})(-\gamma+1/2)(-\gamma+1/8)-(-\gamma+1/2)^2(x^2y^{-1}-1/4)<0,$$
since $-\gamma+1/2>0$, $-1+x^2y^{-1}\leq 4(x^2y^{-1}-1/4)\leq 0$ and $-\gamma+1/8\geq (-\gamma+1/2)/8$. Therefore it suffices to check that
$$ (x^2y^{-1}-28)\left[-\frac{3}{16}\left(x^2y^{-1}-\frac{1}{4}\right)-\left(-\gamma+\frac{1}{8}\right)^2\right]+\frac{3}{16}(-1+x^2y^{-1})^2\leq 0.$$
However, we have $x^2y^{-1}-1/4\leq 1/16-1/4=-3/16$ and $|-\gamma+1/8|\leq 1/8+1/24=1/6$, so the expression in the square brackets is not smaller than $9/256-1/36>1/144$ and hence the full expression on the left-hand side above is not bigger than
$$ (x^2y^{-1}-28)\cdot \frac{1}{144}+\frac{3}{16}(-1+x^2y^{-1})^2\leq -\frac{27}{144}+\frac{3}{16}=0.$$
This completes the proof.
\end{proof}

The final two lemmas concern the behavior of $B$ on $D_3$. Let us first study (jointly) the functions $b_3$ and $b_4$. As they appear with some negative coefficients in the definition of $B$, we need an appropriate \emph{lower} bound for the functional $\xi$.

\begin{lemma}\label{lem2}
For any $(x,y,w,v)\in \mathcal{D}_c$ we have 
$$\xi_{b_4}''(0)\geq \frac{c|x|^3s^2}{72v^4}\quad \mbox{and}\quad \xi_{b_3+1152b_4}''(0)\geq \frac{2}{3}c|x|v^{-2}d^2+15c|x|^3v^{-4}s^2.$$
\end{lemma}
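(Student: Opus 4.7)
Both $b_3$ and $b_4$ are independent of $y$, so $\xi_{b_3}''(0)$ and $\xi_{b_4}''(0)$ are quadratic forms in $(d,r,s)$ arising from perturbations of $(x,w,v)$ only. I would begin with $b_3=64c^{3/2}|x|^3v^{-2}$: a one-line differentiation (using $\partial_x^2|x|^3=6|x|$) gives the perfect-square identity
$$ \xi_{b_3}''(0) = 384\,c^{3/2}|x|v^{-2}\bigl(d-xs/v\bigr)^2\geq 0. $$
This form is degenerate along the line $d=xs/v$, so it alone cannot separate $d^2$ from $s^2$; the role of $b_4$ is to break this degeneracy and contribute genuine diagonal $d^2$ and $s^2$ terms.

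For $b_4$, set $u=wv^2$, $h(u)=6-4u^{-1/2}-c^{-1/2}\ln u$, $H=h^{-2}$, and $\psi(w,v)=v^{-2}H(u)$, so that $b_4=c^{3/2}|x|^3\psi(w,v)$. Direct estimates on $[1,c]$ give: $h'(u)=2u^{-3/2}-c^{-1/2}u^{-1}\geq 0$ (the zero of $h'$ is at $u=4c$, outside $[1,c]$), $h(1)=2$, $h\leq 6$, $uh'(u)=2u^{-1/2}-c^{-1/2}\in[0,2]$, and $h''(u)=-3u^{-5/2}+c^{-1/2}u^{-2}<0$; consequently $H\in[1/36,1/4]$, $uH'=-2uh^{-3}h'\in[-1/2,0]$, and $H''=6h^{-4}(h')^2-2h^{-3}h''\geq 0$. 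By the product rule and the chain rule on $u=wv^2$,
\begin{align*}
\xi_{b_4}''(0) = c^{3/2}\bigl[6|x|\psi d^2 + 6\,x|x|(\psi_w r + \psi_v s)d + |x|^3(\psi_{ww}r^2 + 2\psi_{wv}rs + \psi_{vv}s^2)\bigr],
\end{align*}
with $\psi_w=H'$, $\psi_{ww}=v^2H''$, $\psi_v=2v^{-3}(uH'-H)$, $\psi_{wv}=2uH''/v$, and $\psi_{vv}=2v^{-4}(3H-3uH'+2u^2H'')$. In particular the pure $s^2$-diagonal of $\xi_{b_4}''(0)$ satisfies $c^{3/2}|x|^3\psi_{vv}\geq c^{3/2}|x|^3 v^{-4}/6$, which alone beats $c|x|^3/(72v^4)$ by a factor of $12c^{1/2}\geq 12$. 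For (ii), the $d^2$-coefficient of $\xi_{b_3}''(0)+1152\,\xi_{b_4}''(0)$ is at least $384c^{3/2}|x|v^{-2}+192c^{3/2}|x|v^{-2}=576\,c^{3/2}|x|v^{-2}$, the $s^2$-coefficient at least $576\,c^{3/2}|x|^3v^{-4}$, and the $r^2$-coefficient $1152\,c^{3/2}|x|^3v^2H''\geq 0$ --- each dwarfing the claimed bound.

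The main obstacle is the cross-term analysis, i.e.\ verifying that the symmetric $3\times 3$ matrix obtained after subtracting the announced diagonal lower bounds is positive semidefinite in $(d,r,s)$. I would argue as follows: the perfect-square structure of $\xi_{b_3}''(0)$ absorbs a portion of the $ds$-coupling; the remaining $ds$-coupling (proportional to $x|x|\psi_v$) is dominated by AM--GM against the ample $d^2$- and $s^2$-diagonal coefficients supplied by $b_4$; the $dr$- and $rs$-couplings are absorbed similarly, using the $r^2$-diagonal coefficient supplied by $H''>0$. The positivity of $H''$ is precisely where the logarithmic term $-c^{-1/2}\ln u$ in the definition of $h$ is used: it is what keeps $h''<0$ throughout $[1,c]$ and hence forces $H''>0$. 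Verifying the resulting AM--GM inequalities with the numerical constants $\tfrac{2}{3}$, $15$, $72$, and $1152$ reduces, after Schur-complement elimination of the $r$ variable, to a bounded finite-step calculation that I expect to close comfortably given the large slack in the diagonal entries noted above.
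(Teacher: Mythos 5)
Your preliminary computations are correct (the perfect-square identity for $\xi_{b_3}''(0)$, the derivative formulas for $\psi$, and the sign and size estimates for $h$, $h'$, $h''$, $H$, $H'$, $H''$), but the proposal is not a proof: the step you yourself call ``the main obstacle''---showing that the $3\times3$ matrix obtained after subtracting the announced lower bound is positive semidefinite---is deferred to ``a bounded finite-step calculation that I expect to close comfortably.'' That is precisely the part of the lemma that needs to be established, and your grounds for optimism are shaky. The slack is \emph{not} large in the $r^2$-diagonal: $\psi_{ww}=v^2H''(u)$, and near $u=c$ one has $h'(c)=c^{-3/2}$, $h''(c)=-2c^{-5/2}$, so $H''(c)=O(c^{-5/2})$ and $a_{22}=c^{3/2}|x|^3v^2H''=O(|x|^3v^2/c)$. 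Meanwhile $a_{12}=3c^{3/2}x|x|\psi_w$ is $O(x^2)$ there. A naive Schur elimination of $r$ thus produces a correction to $a_{11}$ comparable in size to $a_{11}$ itself (both $\sim c^{3/2}|x|/v^2$ up to constants), and whether the residual closes depends on delicate cancellations that you have not exhibited. Moreover, the $ds$- and $rs$-couplings also have to be absorbed; pointing out that each diagonal entry ``dwarfs the claimed bound'' does not bound the quadratic form.

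The paper avoids all of this by exploiting the structure $b_4=c^{3/2}|x|^3G^{-2}$ with $G(w,v)=6v-4w^{-1/2}-c^{-1/2}v\ln(wv^2)$, i.e.\ $\psi=G^{-2}$ is a power of a single function of $(w,v)$. This gives the exact decomposition
$$D^2_{x,w,v}\big(|x|^3G^{-2}\big)=6G^{-2}\,uu^{\mathsf T}-2|x|^3G^{-3}\begin{bmatrix}0&0&0\\0&G_{ww}&G_{wv}\\0&G_{vw}&G_{vv}\end{bmatrix},\quad u=\begin{bmatrix}|x|^{1/2}\\-|x|^{3/2}G^{-1}G_w\\-|x|^{3/2}G^{-1}G_v\end{bmatrix},$$
(up to the sign convention $x>0$), so the rank-one piece absorbs \emph{all} couplings involving $d$ (including the $dr$-coupling that makes your Schur elimination delicate), and only the $2\times2$ Hessian of $G$ in $(r,s)$ remains. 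One then shows that this $2\times 2$ block, with $\tfrac32 c^{-1/2}v^{-1}$ added to the $(v,v)$-slot, is negative semidefinite---a short determinant computation reducing to $3c^{1/2}\geq 3w^{1/2}v$---and reads off the $s^2$ lower bound from $G\leq 6v$. Part (ii) then follows from an explicit quadratic-discriminant check in $(d,s)$ alone, with no $3\times3$ analysis needed at all. If you want to complete your route you must either rediscover this rank-one cancellation in the $\psi$-variables, or genuinely carry out and verify the $3\times3$ PSD determinants; as written there is a gap.
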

\begin{proof} It will be convenient to introduce the function
\begin{align*}
 G(w,v)&=v\big(6-4(wv^{-2})^{-1/2}-c^{-1/2}\ln(wv^2)\big)\\
&=6v-4w^{-1/2}-c^{-1/2}v\ln(wv^2).
\end{align*}
Then $b_4(x,y,w,v)=c^{3/2}|x|^3G(w,v)^{-2}$. 
Let us compute the Hessian of the function $b_4/c^{3/2}$ (considered as a function of variables $x$, $w$ and $v$). It is equal to
\begin{align*}
6\!\left[\!\begin{array}{ccc}
|x|G^{-2} & -x^2G^{-3}G_w & -x^2 G^{-3}G_v\\
-x^2G^{-3}G_w & |x|^3G^{-4}(G_w)^2 & |x|^3G^{-4}G_wG_v\\
-x^2G^{-3}G_v & |x|^3G^{-4}G_wG_v & |x|^3 G^{-4}(G_v)^2
\end{array}\!\right] \! -2|x|^3G^{-3}\!\left[\begin{array}{ccc}
0 & 0 & 0\\
0 & G_{ww} & G_{wv}\\
0 & G_{vw} & G_{vv}
\end{array}\right].
\end{align*}
The first matrix is nonnegative-definite, which can be easily checked by Sylvester's criterion. To deal with the second part, we will prove that the matrix
$$ A=\left[\begin{array}{cc}
G_{ww} & G_{wv}\\
G_{vw} & G_{vv}+\frac{3}{2}c^{-1/2}v^{-1}
\end{array}\right]$$
is nonpositive-definite. We have that $G_{ww}(w,v)=c^{-1/2}w^{-5/2}(-3c^{1/2}+w^{1/2}v)$, $G_{wv}(w,v)=-c^{-1/2}w^{-1}$ and $G_{vv}(w,v)=-2c^{-1/2}v^{-1}$. 
Since $wv^2\leq c$, we get $G_{ww}<0$ and hence it suffices to check that $\operatorname*{det}A\geq 0$. This is equivalent to
$$ \frac{3c^{1/2}-w^{1/2}v}{2cw^{5/2}v}\geq \frac{1}{cw^2},$$
or $3c^{1/2}\geq 3w^{1/2}v$, which is evidently true. Combining all the observations above and multiplying by $c^{3/2}$ (the above considerations involved the function $b_4/c^{3/2}$) we obtain
$$ \xi_{b_4}''(0)=\big\langle D^2_{x,w,v}b_2(d,r,s),(d,r,s)\big\rangle\geq 3c|x|^3G^{-3}v^{-1}s^2,$$
and it remains to note that $G\leq 6v$ to complete the proof of the first inequality. The second estimate follows quickly: we compute directly that
$$ \xi_{b_3}''(0)=64c^{3/2}\left({6|x|v^{-2}d^2}-12x|x|v^{-3}ds+{6|x|^3s^2}{v^{-4}}\right),$$
so, using the previous bound for $\xi_{b_4}''(0)$ and the fact that $1152/72=16$, we get
\begin{align*}
 \xi_{b_3+1152b_4}''(0)&\geq 64c^{3/2}\left({6|x|v^{-2}d^2}-12x|x|v^{-3}ds+{6|x|^3s^2}{v^{-4}}\right)+{16c|x|^3v^{-4}s^2}\\
 &\geq \frac{2}{3}c|x|v^{-2}d^2+15c|x|^3v^{-4}s^2.\qedhere
\end{align*}
\end{proof}

We are ready for the analysis of $B$ on $D_3$.

\begin{lemma}
We have $\xi_{B_3}''(0)\leq 0$ on $D_3$.
\end{lemma}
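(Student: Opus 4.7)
The plan is to decompose $B_3 = b_1 - \tfrac{1}{8}y^{3/2}w + 12\,b_5 - 240(b_3 + 1152\,b_4)$ and compute $\xi_{B_3}''(0)$ contribution by contribution, then verify nonpositive-definiteness of the resulting quadratic form in $(d,r,s)$ using the defining inequality $y^{1/2}\le 4|x|$ of $D_3$. For $b_1$, the upper bound of Lemma \ref{lem1}(i) combined with $F(w,v)\le w$ yields a diagonal form, $\xi_{b_1}''(0) \le 3y^{1/2}w\,d^2 + \tfrac{1}{2}\alpha(\alpha-1)y^{3/2}v^2(wv^2-a)^{\alpha-2}\,r^2$, with a negative $r^2$ coefficient and no cross terms. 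The term $-y^{3/2}w/8$ has vanishing Hessian in $(x,w,v)$ and so contributes only $-\tfrac{3}{8}y^{1/2}w\,d^2$. The composite $-240(b_3+1152\,b_4)$ is handled by Lemma \ref{lem2}, producing the diagonal upper bound $-160\,c|x|v^{-2}\,d^2 - 3600\,c|x|^3 v^{-4}\,s^2$ (cross terms already absorbed within that lemma). Finally, since $b_5$ is independent of $x$, the only cross term in $12\,\xi_{b_5}''(0)$ is an $rs$-term; direct computation with $\beta = 3/4$ produces a positive $d^2$ piece $36(c/t)^{3/4}y^{1/2}w\,d^2$, a negative $r^2$ piece $-\tfrac{9}{4}c^{3/4}y^{3/2}w^{-7/4}v^{-3/2}\,r^2$, a positive $s^2$ piece $45\,c^{3/4}y^{3/2}w^{1/4}v^{-7/2}\,s^2$, and the cross term $-9\,c^{3/4}y^{3/2}w^{-3/4}v^{-5/2}\,rs$.

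Once these contributions are assembled, the quadratic form for $\xi_{B_3}''(0)$ has no $dr$ or $ds$ couplings at all, so nonpositive-definiteness splits into two independent sub-checks. For the $d^2$ coefficient, one uses $y^{1/2}\le 4|x|$ together with $wv^2 = t \le c$ to rewrite $3y^{1/2}w\,d^2 \le 12\,c|x|v^{-2}\,d^2$ and $36(c/t)^{3/4}y^{1/2}w\,d^2 \le 144\,c|x|v^{-2}\,d^2$; since $12 + 144 < 160$ (and the $-\tfrac{3}{8}y^{1/2}w$ piece only helps), the $d^2$ coefficient is at most $-4\,c|x|v^{-2} \le 0$. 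For the $(r,s)$ sub-matrix, one first checks that its $s^2$ coefficient is negative by comparing $45\,c^{3/4}y^{3/2}w^{1/4}v^{-7/2}$ from $b_5$ against $3600\,c|x|^3 v^{-4}$ from $b_3$; using $y^{3/2}\le 64|x|^3$ on $D_3$ together with $t\le c$, the former is bounded by $45 c|x|^3 v^{-4}\cdot (64 \cdot t^{1/4} c^{-1/4})/|x|^0$, a small fraction of the latter.

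The main obstacle is the final determinant check in the $(r,s)$ variables: the cross term from $12\,b_5$ must be dominated by the geometric mean of the surviving $r^2$ and $s^2$ terms. The cleanest route pairs the negative $r^2$ piece produced by $b_5$ itself against the dominant $-3600\,c|x|^3 v^{-4}$ contribution to $s^2$, reducing the determinant inequality (after cancellation of powers) to one of the shape $400 \ge c^{-1/4} t^{1/4} y^{3/2} |x|^{-3}$. This follows at once from $t\le c$ and $y^{3/2}\le 64|x|^3$ with a comfortable margin, and the rest of the verification is routine bookkeeping of constants. This margin is precisely what dictates the very large coefficient $276480$ in front of $b_4$ (and the matching $240$ in front of $b_3$) in the definition of $B_3$.
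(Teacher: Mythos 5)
Your decomposition of $B_3$ and your handling of the $d^2$ coefficient are correct, and your route is genuinely different from the paper's: where the paper completes the square in $r$ for the $(r,s)$-part of $\xi_{b_5}''(0)$ — observing that $-\tfrac{3}{16}w^{-1}r^2-\tfrac34 v^{-1}rs+\tfrac{15}{4}wv^{-2}s^2\le \tfrac92 wv^{-2}s^2$, so that the combined $\xi_{b_1+12b_5}''(0)$ has \emph{no} $r^2$ or $rs$ terms at all and one only needs to compare a pure $d^2,s^2$ diagonal against $240\xi_{b_3+1152b_4}''(0)$ — you instead retain the full $(r,s)$-matrix and try to close via a determinant check. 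That is a legitimate alternative, but you are mishandling it.

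The gap is in the final determinant argument. You pair the $b_5$ coefficient $-\tfrac94 c^{3/4}y^{3/2}w^{-7/4}v^{-3/2}$ in $r^2$ against the full $-3600c|x|^3v^{-4}$ in $s^2$, arriving at $400\ge c^{-1/4}t^{1/4}y^{3/2}|x|^{-3}$. But $-3600c|x|^3v^{-4}$ is \emph{not} the $s^2$ coefficient of the quadratic form you are testing: the $+45c^{3/4}y^{3/2}w^{1/4}v^{-7/2}s^2$ piece from $12b_5$ is also present and must be absorbed. As your own earlier estimate shows, it can be as large as $2880c|x|^3v^{-4}$, so the usable $s^2$ coefficient is only $\le -720c|x|^3v^{-4}$. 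Redoing the determinant computation with $720$ in place of $3600$ gives $80\ge c^{-1/4}t^{1/4}y^{3/2}|x|^{-3}$. Since the right-hand side is at most $64$ on $D_3$, the inequality still holds — but the margin is $80/64=1.25$, not the factor $\approx 6$ your ``comfortable margin'' suggests, and the step you dismiss as ``routine bookkeeping'' is exactly where the constant tightens by a factor of $5$. As a side note, you cannot split off the positive $b_5$ $s^2$ piece and pair it against the $b_1$ $r^2$ piece (both in the remainder would then be treated separately, but a diagonal block with one positive and one negative entry is not negative-semidefinite); the only correct accounting is the one above, absorbing $+45c^{3/4}y^{3/2}w^{1/4}v^{-7/2}$ directly into the $s^2$ budget. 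The paper's completion of square is cleaner precisely because it eliminates the $r^2$ and $rs$ terms of $b_5$ in one stroke and sidesteps this delicate bookkeeping altogether.
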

\begin{proof}
It follows from the calculations already carried out in the proof of Lemma \ref{lem1} that
$$ \xi_{b_1}''(0)\leq 3y^{1/2}(wv^2-a)^\alpha v^{-2} d^2\leq 3y^{1/2}wd^2\leq 3(c/t)^\beta y^{1/2}wd^2.$$
Furthermore,
\begin{align*}
 \xi_{b_5}''(0)&=3(c/t)^\beta y^{1/2}w d^2+(c/t)^\beta y^{3/2}\left(-\frac{3}{16}w^{-1}r^2-\frac{3}{4}v^{-1}rs+\frac{15}{4}wv^{-2}s^2\right)\\
 &\leq 3(c/t)^\beta y^{1/2}w d^2+(c/t)^\beta y^{3/2}\cdot \frac{9}{2}wv^{-2}s^2.
 \end{align*}
Because $y^{1/2}\leq 4 |x|$, we get
\begin{align*}
 \xi''_{b_1+12b_5}(0)&\leq 39(c/t)^\beta (4|x|)wd^2+54 (c/t)^\beta (4|x|)^3 wv^{-2}s^2\\
&\leq 156c|x|v^{-2}d^2+3456c|x|^3v^{-4}s^2\\
&\leq 240\left(\frac{2}{3}c|x|v^{-2}d^2+15c|x|^3v^{-4}s^2\right),
\end{align*}
which by the previous lemma is not bigger than 
$$240\xi_{b_3+1152b_4}''(0)=\xi_{240b_3+276480b_4}''(0).$$
\end{proof}

\section{Extrapolation: proof of Theorem \ref{mainthm}}\label{extrapolation}

We follow the presentation in Duoandikoetxea \cite{Du}. The proof exploits the following structural properties of $A_p$ weights.

\begin{lemma}\label{lem31}
(i) Let $1\leq p<p_0<\infty$. If $W\in A_p(mart)$ and $U\in A_1(mart)$, then $WU^{p-p_0}\in A_{p_0}(mart)$ and
$$ [WU^{p-p_0}]_{A_{p_0}}\leq [W]_{A_p}[U]_{A_1}^{p_0-p}.$$

(ii) Let $1<p_0<p<\infty$. If  $W\in A_p(mart)$ and $U\in A_1(mart)$, then $$(W^{p_0-1}U^{p-p_0})^{1/(p-1)}\in A_{p_0}(mart)$$  and
$$ [(W^{p_0-1}U^{p-p_0})^{1/(p-1)}]_{A_{p_0}}\leq [W]_{A_p}^{(p_0-1)/(p-1)}[U]_{A_1}^{(p-p_0)/(p-1)}.$$
\end{lemma}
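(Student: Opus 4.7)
The plan is to mimic the classical Jones-type factorization argument, using conditional Hölder's inequality together with a pointwise consequence of the martingale $A_1$ condition. Before turning to either part, I would first extract the usable form of $A_1$: letting $t\to\infty$ in the defining inequality $[U]_{A_1} U_t \geq MU \geq U_s$ and invoking martingale convergence $U_t \to U$ yields $[U]_{A_1} U \geq U_s$ for every $s$ almost surely, equivalently the pointwise lower bound $U \geq U_t/[U]_{A_1}$. This is the martingale analogue of the classical $u \geq \langle u\rangle_Q/[u]_{A_1}$ on $Q$, and is the sole $A_1$-input used below.

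For part (i), set $\tilde W = W U^{p-p_0}$ and bound the two factors of the $A_{p_0}$-product separately. Since $p - p_0 < 0$, the pointwise bound above gives $U^{p-p_0} \leq [U]_{A_1}^{p_0-p} U_t^{p-p_0}$, hence $\tilde W_t \leq [U]_{A_1}^{p_0-p} U_t^{p-p_0} W_t$. For the dual factor, write $\tilde W^{-1/(p_0-1)} = (W^{-1/(p-1)})^{\alpha} \, U^{\beta}$ with the conjugate pair $\alpha = (p-1)/(p_0-1)$, $\beta = (p_0-p)/(p_0-1)$, $\alpha+\beta=1$; conditional Hölder yields the upper bound $V_t^{\alpha} U_t^{\beta}$ (where $V_t = \E[W^{-1/(p-1)}|\F_t]$), which becomes $V_t^{p-1} U_t^{p_0-p}$ after raising to $p_0-1$. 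Multiplying the two bounds, all $U_t$-powers cancel exactly and one is left with $[U]_{A_1}^{p_0-p} W_t V_t^{p-1} \leq [U]_{A_1}^{p_0-p} [W]_{A_p}$ by the $A_p$ hypothesis. The boundary case $p=1$ needs the degenerate replacement of Hölder by the direct bound $W^{-1/(p_0-1)} \leq [W]_{A_1}^{1/(p_0-1)} W_t^{-1/(p_0-1)}$, obtained by applying the same $A_1$ deduction to $W$.

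For part (ii), write $\tilde W = W^a U^b$ with $a = (p_0-1)/(p-1) \in (0,1)$, $b = (p-p_0)/(p-1) \in (0,1)$, $a+b = 1$. Conditional Hölder with exponents $1/a$, $1/b$ now applies directly to $\tilde W_t$, giving $\tilde W_t \leq W_t^a U_t^b$. On the dual side, $\tilde W^{-1/(p_0-1)} = W^{-1/(p-1)} U^{-\gamma}$ with $\gamma = b/(p_0-1) > 0$, and the pointwise $A_1$ bound gives $U^{-\gamma} \leq [U]_{A_1}^{\gamma} U_t^{-\gamma}$, hence $\E[\tilde W^{-1/(p_0-1)}|\F_t] \leq [U]_{A_1}^{\gamma} U_t^{-\gamma} V_t$. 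Raising to $p_0-1$ and multiplying, the $U_t$-factors again cancel and the residue $[U]_{A_1}^b W_t^a V_t^{p_0-1}$ is closed off by the identity $V_t^{p_0-1} = (V_t^{p-1})^a$ combined with the $A_p$ bound $W_t V_t^{p-1} \leq [W]_{A_p}$, producing the required factor $[W]_{A_p}^a [U]_{A_1}^b$.

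I do not foresee a substantial obstacle: the whole argument is careful bookkeeping of exponents glued together with conditional Hölder and one pointwise form of $A_1$. The only point requiring a moment's care is the initial derivation of $U \geq U_t/[U]_{A_1}$ from the stated form $[U]_{A_1} U_t \geq MU$, which reverses the roles of $U$ and $U_t$ via martingale convergence — without this inversion of direction, neither of the two factor-bounds in (i) and (ii) can be made to close.
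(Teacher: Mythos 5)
Your proof is correct and follows essentially the same approach as the paper: the pointwise $A_1$ bound $U \geq [U]_{A_1}^{-1}U_t$ controls one factor of the $A_{p_0}$-product, conditional H\"older controls the other, and the $U_t$-powers cancel to leave $W_tV_t^{p-1}$. You also usefully flag the $p=1$ endpoint of part (i), where $\alpha = (p-1)/(p_0-1) = 0$ degenerates the H\"older step and one must instead apply the pointwise $A_1$ bound to $W$ itself; the paper's proof does not comment on this case.
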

\begin{proof}
We will establish only the first part, the proof of the second half is analogous. For any $t\geq 0$ we have $U_\infty\geq [U]_{A_1}^{-1}MU\geq [U]_{A_1}^{-1}U_t$, so
$$ \E (W_\infty U_\infty^{p-p_0}|\F_t)\leq [U]_{A_1}^{p_0-p}\E(W_\infty|\F_t)U_t^{p-p_0}=[U]_{A_1}^{p_0-p}W_tU_t^{p-p_0}.$$
Furthermore, by H\"older's inequality,
\begin{align*}
 \E((W_\infty U_\infty^{p-p_0})^{1/(1-p_0)}|\F_t)&\leq (\E(W_\infty^{1/(1-p)}|\F_t))^{(p-1)/(p_0-1)}\E(U_\infty|\F_t)^{(p_0-p)/(p_0-1)}\\
 &= (\E(W_\infty^{1/(1-p)}|\F_t))^{(p-1)/(p_0-1)}U_t^{(p_0-p)/(p_0-1)}.
\end{align*}
Combining these two estimates gives the desired inequality.
\end{proof}

The next step is the probabilstic analogue of the so-called Rubio de Francia algorithm.  Note that in  light of Doob's inequality, for any $p>1$ the maximal operator $M$ (leading to martingale maximal function) can be treated as a sublinear operator on nonnegative random variables belonging to $L^p$. Indeed, any such random variable $f$ gives rise to the $L^p$-bounded martingale $(\E(f|\F_t))_{t\geq 0}$, whose maximal function $Mf$ is again a nonnegative random variable belonging to $L^p$. This observation is crucial for our further considerations, as Rubio de Francia algorithm involves iterations of the operator $M$ and thus it can be applied in the martingale setting and the proofs are the same as in \cite{Du}.   Here are the precise statements as in \cite[pp 1888-1892]{Du}.  We give the proofs  for the convenience of the reader. The martingale extrapolation  has been used  in \cite{DPW} in the proof of the dimensionless linear weighted bounds for the vector of Riesz transforms.

\begin{lemma}\label{lem32}
Let $p>1$. Let $W\in A_p(mart)$ and let $f$ be a nonnegative random variable belonging to $L^p(W)$. Denote by $M^k$ the $k$-th iterate of $M$, $M^0f=f$, and let $||M||_{L^p(W)}$ be the norm of $M$ as an operator on $L^p(W)$. Then the random variable
$$ Rf=\sum_{k=0}^\infty \frac{M^kf}{(2\|M\|_{L^p(W)})^k}$$
satisfies (i) $f\leq Rf$ almost surely, (ii) $\|Rf\|_{L^p(W)}\leq 2\|f\|_{L^p(W)}$ and (iii) $Rf$ is an $A_1$ weight with $[Rf]_{A_1}\leq 2\|M\|_{L^p(W)}.$
\end{lemma}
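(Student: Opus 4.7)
The plan is to carry Rubio de Francia's iteration argument into the martingale framework, using the two features emphasized just before the statement: the $L^p(W)$-boundedness of $M$ for $p>1$ (Doob's inequality) and its sublinearity on nonnegative random variables. All three conclusions will drop out of the explicit form $Rf=\sum_{k=0}^\infty M^kf/(2\|M\|_{L^p(W)})^k$.

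Claim (i) is immediate: the $k=0$ term of the defining series is $f$ itself and every subsequent term is nonnegative, so $f\le Rf$ pointwise. For (ii) I would apply Minkowski's inequality in $L^p(W)$ to the series, using the inductive bound $\|M^kf\|_{L^p(W)}\le \|M\|_{L^p(W)}^k\|f\|_{L^p(W)}$ to collapse the resulting sum to a geometric series of ratio $1/2$; this simultaneously justifies $L^p(W)$-convergence of the series and yields the constant $2$.

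Claim (iii) is the heart of the matter. First I would record the ``subadditivity on series'' property
\[
M\Bigl(\sum_k g_k\Bigr)\le \sum_k Mg_k,\qquad g_k\ge 0,
\]
obtained by writing $\bigl(\sum_k g_k\bigr)_t=\sum_k (g_k)_t\le \sum_k Mg_k$ (via monotone convergence for conditional expectations) and then taking $\sup_t$. Applying this to the definition of $Rf$ yields
\[
M(Rf)\le \sum_{k=0}^\infty \frac{M^{k+1}f}{(2\|M\|_{L^p(W)})^k}=2\|M\|_{L^p(W)}\sum_{k=1}^\infty \frac{M^kf}{(2\|M\|_{L^p(W)})^k}\le 2\|M\|_{L^p(W)}\, Rf,
\]
which is exactly the $A_1$ bound for $Rf$ with characteristic $2\|M\|_{L^p(W)}$ (interpreted in the form $MU\le CU$ actually used in the proof of Lemma \ref{lem31}).

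There is no genuinely hard step; the only point to watch is that the interchange of $M$ with an infinite sum is available precisely because the iterates $M^k f$ are nonnegative, which is why the construction is run with $f\ge 0$ from the start. The whole scheme is the direct martingale counterpart of Rubio de Francia's algorithm in $\R^n$, and as remarked in the passage, nothing beyond the measurability and nonnegativity of the iterates is needed to make the argument legitimate in the continuous-time setting.
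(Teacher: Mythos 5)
Your proof is correct and follows the same Rubio de Francia iteration argument as the paper: the $k=0$ term gives (i), the operator-norm bound $\|M^k\|_{L^p(W)}\le\|M\|_{L^p(W)}^k$ plus the geometric series gives (ii), and sublinearity of $M$ applied to the defining series gives (iii). You are slightly more careful than the paper in justifying the termwise passage of $M$ through the infinite sum (monotone convergence for nonnegative iterates) and in noting that the conclusion $M(Rf)\le 2\|M\|_{L^p(W)}\,Rf$ is the form of the $A_1$ condition actually invoked in the proof of Lemma~\ref{lem31}, but the route is the same.
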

\begin{proof}
The inequality $f\leq Rf$ is evident since the term corresponding to $k=0$ is equal to $f$. The second property, (ii), follows from the estimate $||M^k||_{L^p(W)}\leq ||M||_{L^p(W)}^k$. Finally, by the sublinearity of $M$, 
\begin{eqnarray*} M(Rf)&\leq& \sum_{k=0}^\infty \frac{M^{k+1}f}{(2\|M\|_{L^p(W)})^k}\\
&\leq&2\|M\|_{L^p(W)}\sum_{k=1}^\infty \frac{M^kf}{(2\|M\|_{L^p(W)})^k}\\
&=&2\|M\|_{L^p(W)}Rf, 
\end{eqnarray*}
which proves (iii). 
\end{proof}

The final ingredient is the following estimate for $||M||_{L^p(W)}$. This is a probabilistic counterpart of the result of Buckley \cite{Bu} from the nineties, concerning the classical Hardy-Littlewood maximal operator. 

\begin{lemma}
For any $1<p<\infty$ there is a constant $c_p$ depending only on $p$ such that
$$ \|MX\|_{L^p(W)}\leq c_p[W]_{A_p}^{1/(p-1)}\|X_\infty\|_{L^p(W)}.$$
\end{lemma}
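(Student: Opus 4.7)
This statement is the martingale counterpart of Buckley's classical sharp weighted bound for the Hardy--Littlewood maximal operator. My plan is to follow Buckley's approach, translated into the stopping-time language: establish a weighted weak-type $(p,p)$ estimate first, then promote it to the sharp strong-type bound via the self-improvement of $A_p$ weights combined with Marcinkiewicz interpolation.

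For the weak-type step I would fix $\lambda>0$ and introduce the stopping time $\tau=\inf\{t\geq 0:|X_t|>\lambda\}$. On $\{\tau<\infty\}$ one has $|X_\tau|>\lambda$ and, by optional sampling and conditional Jensen, $|X_\tau|\leq\E(|X_\infty|\mid\F_\tau)$. Writing $V_t=\E(W^{-1/(p-1)}\mid\F_t)$, the $A_p$ condition furnishes the pointwise bound $W_\tau V_\tau^{p-1}\leq[W]_{A_p}$. Combining these three ingredients with an inner H\"older applied to $|X_\infty|=|X_\infty|W_\infty^{1/p}\cdot W_\infty^{-1/p}$ inside the conditional expectation, absorbing $W_\tau$ via the $A_p$ bound with a compensating factor of $V_\tau^{(p-1)/p}$, and then one final outer H\"older, leads to
\[
\lambda^{p}\,W(\{MX>\lambda\})\;\leq\;[W]_{A_p}\,\|X_\infty\|_{L^p(W)}^{p},
\]
i.e.\ the weak-type bound with the sub-optimal exponent $1/p$ on the $A_p$ characteristic.

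To upgrade this to the strong-type bound with the sharp exponent $1/(p-1)$, I would invoke the self-improvement of $A_p$ martingale weights: there is $\varepsilon=\varepsilon([W]_{A_p})>0$ with $W\in A_{p-\varepsilon}$ and quantitative control on $[W]_{A_{p-\varepsilon}}$ (the martingale reverse H\"older inequality). Applying the weak-type estimate at level $p-\varepsilon$ in place of $p$, interpolating with the trivial $L^\infty\to L^\infty$ bound through Marcinkiewicz, and then optimizing $\varepsilon$ as a function of $[W]_{A_p}$, should yield the claimed strong-type inequality.

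The main obstacle is precisely this final sharpening. The direct stopping-time/H\"older argument only delivers the exponent $1/p$, and reaching the optimal $1/(p-1)$ requires the martingale reverse H\"older property with explicit quantitative control on the characteristic (or, alternatively, a sparse/testing-condition route \`a la Sawyer adapted to continuous-time martingales). The unweighted Doob maximal inequality enters the picture only through the trivial $L^\infty$ endpoint of the interpolation.
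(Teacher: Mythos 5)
Your proposal takes a genuinely different route from the paper. The paper does not give a self-contained proof of this lemma at all: it simply cites Os\k{e}kowski's result~\cite{Os1}, which produces the \emph{sharp} constant in weighted Doob's inequality via a Bellman-function construction, and then massages the implicit formula for $d(p,[W]_{A_p})$ into the explicit bound $\|M\|_{L^p(W)}\le \frac{pe}{p-1}[W]_{A_p}^{1/(p-1)}$. No interpolation, no reverse H\"older inequality, no weak-type endpoint appears. You instead propose the classical Buckley scheme transplanted to continuous-time martingales: a stopping-time weak-type bound followed by self-improvement of the weight plus Marcinkiewicz interpolation.

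Your weak-type step is correct and can be made fully rigorous. Setting $\tau=\inf\{t:|X_t|>\lambda\}$, conditional Jensen gives $|X_\tau|\le \E(|X_\infty|\mid\F_\tau)$, and the conditional H\"older with $|X_\infty|=|X_\infty|W^{1/p}\cdot W^{-1/p}$ yields $|X_\tau|^p\le \E(|X_\infty|^pW\mid\F_\tau)\,V_\tau^{p-1}$. Multiplying by $W_\tau\mathbf{1}_{\{\tau<\infty\}}$, using $W_\tau V_\tau^{p-1}\le[W]_{A_p}$, and taking expectations (with $\{\tau<\infty\}\in\F_\tau$) gives exactly $\lambda^pW(\{MX>\lambda\})\le[W]_{A_p}\|X_\infty\|_{L^p(W)}^p$. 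This much is sound.

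The gap is precisely where you flag it: the promotion from the exponent $1/p$ to $1/(p-1)$. That step is the entire content of Buckley's theorem, and in your write-up it is replaced by an appeal to a ``martingale reverse H\"older inequality with explicit quantitative control'' that is neither stated precisely nor proved nor cited. In the Euclidean setting one needs either the sharp reverse H\"older for $\sigma=w^{1-p'}$ with openness $\varepsilon\sim 1/[\sigma]_{A_\infty}$, or the two-weight/Sawyer-testing route, and transplanting either to a general continuous filtration requires real work (or a precise reference). Moreover, for the purposes of this paper the lemma is not used qualitatively: the explicit constant $pe/(p-1)$ feeds directly into the computation of the constants $553$ and $189$ in Theorem~\ref{mainthm}, so even a completed Buckley-style argument would still need to be tracked quantitatively to substitute for the citation to~\cite{Os1}. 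As it stands, the proposal establishes the weak-type estimate but does not close the strong-type bound.
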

We will need an explicit formula for the constant $c_p$. It follows from  \cite{Os1} that 
$$ \|M\|_{L^p(W)}\leq \frac{p}{p-1-d(p,[W]_{A_p})},$$
where, for a given $1<p<\infty$ and $c\geq 1$, the constant $d(p,c)$ is the unique number in $[0,p-1)$ satisfying the equation
$$ c(1+d)(p-1-d)^{p-1}=(p-1)^{p-1}.$$
Consequently, we may write the more explicit bound
\begin{equation}\label{more-explicit}
\begin{split}
 \|M\|_{L^p(W)}&\leq \frac{p}{p-1-d(p,[W]_{A_p})}\\
&=\frac{p}{p-1}\big(d(p,[W]_{A_p})[W]_{A_p}\big)^{1/(p-1)}\\
&\leq \frac{p}{p-1}\cdot (p-1)^{1/(p-1)}[W]_{A_p}^{1/(p-1)}\leq \frac{pe}{p-1}[W]_{A_p}^{1/(p-1)}.
\end{split}
\end{equation}

We are ready for the main result of this section.

\begin{theorem}\label{thm-extrapolation}
Let $p_0\in (1,\infty)$ and $C,\,\kappa>0$  be  fixed parameters. Suppose that $f$, $g$ are nonnegative random variables such that for any $W\in A_{p_0}(mart)$ we have
$$ \|g\|_{L^{p_0}(W)}\leq C[W]_{A_{p_0}}^\kappa \|f\|_{L^{p_0}(W)}.$$
Then for all $1<p<\infty$ and all $W\in A_p(mart)$ we have
\begin{equation}\label{extrapolation_ineq}
 \|g\|_{L^p(W)}\leq L(W)\|f\|_{L^p(W)},
 \end{equation}
where
$$ L(W)=\begin{cases}
2^{1-p/p_0}C(2\|M\|_{L^p(W)})^{\kappa(p_0-p)}[W]_{A_p}^\kappa & \mbox{if }p<p_0,\\
2^{p'(1-p_0/p)/p_0}C(2\|M\|_{L^{p'}(W^{1-p'})})^{\kappa(p-p_0)/(p-1)}[W]_{A_p}^{\frac{\kappa(p_0-1)}{p-1}} & \mbox{if }p>p_0.
\end{cases}$$
In particular, we have $ L(W)\leq C_1[W]_{A_p}^{\kappa \max\{1,(p_0-1)/(p-1)\}}$ for some $C_1$ not depending on $W$.
\end{theorem}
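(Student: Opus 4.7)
The plan is to run the Rubio de Francia extrapolation scheme of Duoandikoetxea \cite{Du} in the martingale setting, using the three ingredients already assembled: the weight arithmetic in Lemma \ref{lem31}, the Rubio de Francia algorithm in Lemma \ref{lem32}, and the Buckley-type bound \eqref{more-explicit} for $\|M\|_{L^p(W)}$. The passage from $L(W)$ to a power of $[W]_{A_p}$ will then be a direct substitution at the end. The argument splits naturally into the two cases $p<p_0$ and $p>p_0$.

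For $p<p_0$, I would apply Lemma \ref{lem32} to the nonnegative function $f\in L^p(W)$, producing $U=Rf$ with $f\leq U$, $\|U\|_{L^p(W)}\leq 2\|f\|_{L^p(W)}$, and $[U]_{A_1}\leq 2\|M\|_{L^p(W)}$. Lemma \ref{lem31}(i) then provides the weight $W_1:=WU^{p-p_0}\in A_{p_0}$ with $[W_1]_{A_{p_0}}\leq [W]_{A_p}[U]_{A_1}^{p_0-p}$. Writing $g^pW=(g^pW_1^{p/p_0})(W^{1-p/p_0}U^{(p_0-p)p/p_0})$ and invoking H\"older at exponents $p_0/p$ and $p_0/(p_0-p)$ yields
\begin{equation*}
\|g\|_{L^p(W)}^p\leq \|g\|_{L^{p_0}(W_1)}^p\Bigl(\int U^p\,W\Bigr)^{(p_0-p)/p_0}.
\end{equation*}
The hypothesis controls the first factor by $C[W_1]_{A_{p_0}}^\kappa\|f\|_{L^{p_0}(W_1)}$; since $f\leq U$, we have $f^{p_0}W_1=f^{p_0}WU^{p-p_0}\leq f^pW$, so $\|f\|_{L^{p_0}(W_1)}\leq\|f\|_{L^p(W)}^{p/p_0}$. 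The second factor is at most $(2\|f\|_{L^p(W)})^{p(p_0-p)/p_0}$ by part (ii) of Lemma \ref{lem32}. Taking a $p$-th root and collecting constants reproduces the stated formula for $L(W)$.

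For $p>p_0$, I would start from the duality identity
\begin{equation*}
\|g\|_{L^p(W)}=\sup_{\tilde h\geq 0,\ \|\tilde h\|_{L^{p'}(W^{1-p'})}\leq 1}\int g\,\tilde h,
\end{equation*}
obtained from the standard $L^p(W)$-duality via the substitution $\tilde h=hW$. Since $W^{1-p'}\in A_{p'}$, Lemma \ref{lem32} applied at exponent $p'$ and weight $W^{1-p'}$ yields $U=R\tilde h$ with $\tilde h\leq U$, $\|U\|_{L^{p'}(W^{1-p'})}\leq 2$, and $[U]_{A_1}\leq 2\|M\|_{L^{p'}(W^{1-p'})}$. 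Lemma \ref{lem31}(ii) then produces $W_1:=(W^{p_0-1}U^{p-p_0})^{1/(p-1)}\in A_{p_0}$. After bounding $\int g\,\tilde h\leq \int gU$, the factorization $gU=(gW_1^{1/p_0})(UW_1^{-1/p_0})$ combined with H\"older at exponents $p_0,\,p_0'$ reduces matters to the pointwise identity
\begin{equation*}
U^{p_0'}W_1^{-p_0'/p_0}=U^{p'}W^{1-p'},
\end{equation*}
a one-line computation with the exponents $(p_0-1)/(p-1)$ and $(p-p_0)/(p-1)$ built into $W_1$. This gives $\int gU\leq \|g\|_{L^{p_0}(W_1)}\cdot \|U\|_{L^{p'}(W^{1-p'})}^{p'/p_0'}$. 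Applying the hypothesis to the first factor and a second H\"older (at exponents $p/p_0$ and $p/(p-p_0)$, which again produces an integral of $U^{p'}W^{1-p'}$) to bound $\|f\|_{L^{p_0}(W_1)}$ by $\|f\|_{L^p(W)}$ times a power of $2$ finishes the case after taking the supremum over $\tilde h$.

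The concluding assertion $L(W)\lesssim [W]_{A_p}^{\kappa\max\{1,(p_0-1)/(p-1)\}}$ is then a direct substitution: \eqref{more-explicit} gives $\|M\|_{L^p(W)}\lesssim [W]_{A_p}^{1/(p-1)}$, and combining the analogous bound with the standard identity $[W^{1-p'}]_{A_{p'}}=[W]_{A_p}^{1/(p-1)}$ yields $\|M\|_{L^{p'}(W^{1-p'})}\lesssim [W^{1-p'}]_{A_{p'}}^{p-1}=[W]_{A_p}$. The exponents arising from the two cases are $(p_0-1)/(p-1)$ and $1$ respectively, which together give the claimed $\max\{1,(p_0-1)/(p-1)\}$. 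The main obstacle I foresee is the bookkeeping in the $p>p_0$ branch: one has to discover that the specific weight $W_1=(W^{p_0-1}U^{p-p_0})^{1/(p-1)}$ singled out by Lemma \ref{lem31}(ii) is exactly the choice that makes both H\"older applications align so that the ``extra'' factors on either side collapse to $\|U\|_{L^{p'}(W^{1-p'})}$, which is then bounded by $2$ by Rubio de Francia.
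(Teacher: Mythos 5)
Your argument follows the paper's own Rubio de Francia extrapolation scheme closely: for $p<p_0$ your bookkeeping reproduces the stated $L(W)$ exactly, and the final substitution of the Buckley bound \eqref{more-explicit} together with $[W^{1-p'}]_{A_{p'}}=[W]_{A_p}^{1/(p-1)}$ gives the $\max\{1,(p_0-1)/(p-1)\}$ exponent precisely as in the paper.

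The one place you depart from the paper is the form of duality used for $p>p_0$. You dualize $\|g\|_{L^p(W)}$ itself, producing a normalized $\tilde h$ in $L^{p'}(W^{1-p'})$, replace $\tilde h$ by $U=R\tilde h$, and then need a H\"older split of $gU$ at exponents $(p_0,p_0')$ before the hypothesis can be invoked. The paper instead applies duality to $(\E g^pW)^{p_0/p}$, testing against $h$ with $\|h\|_{L^{p/(p-p_0)}(W)}\le 1$, and observes that the pointwise identity $hW_\infty = H^{(p-p_0)/(p-1)}W_\infty^{(p_0-1)/(p-1)}$ together with $H\le RH$ already lands on $\E g^{p_0}W_1$ with \emph{no} first H\"older application. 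Your version therefore picks up the full factor $\|U\|_{L^{p'}(W^{1-p'})}^{1}\le 2$ (your two H\"older exponents on $\|U\|$ sum to $1$, as you note), whereas the paper only accrues the factor $\big(\E(RH)^{p'}W^{1-p'}\big)^{1-p_0/p}\le 2^{p'(1-p_0/p)}$ and ends with $2^{p'(1-p_0/p)/p_0}<2$ after the final $1/p_0$-th root. So your route gives $L(W)=2C(2\|M\|_{L^{p'}(W^{1-p'})})^{\kappa(p-p_0)/(p-1)}[W]_{A_p}^{\kappa(p_0-1)/(p-1)}$ rather than the sharper constant in the theorem statement. This is a cosmetic, not a structural, discrepancy --- the exponent on $[W]_{A_p}$ and the "in particular" bound are unaffected --- but it does propagate into the explicit numerical constant $K_p$ computed afterwards, so if you want to literally reproduce the stated $L(W)$ you should adopt the paper's $(\E g^pW)^{p_0/p}$-duality.
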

\begin{proof}
\emph{Case $p<p_0$.} Let $Rf$ be the weight obtained from Rubio de Fraincia algorithm. Then, applying H\"older's inequality and Lemmas \ref{lem31} and \ref{lem32},
\begin{align*}
\E g^pW_\infty& =\E \big[g^pW_\infty (Rf)^{p(p-p_0)/p_0}(Rf)^{p(p_0-p)/p_0}\big]\\
&\leq \big(\E g^{p_0}W_\infty (Rf)^{p-p_0}\big)^{p/p_0}\big(\E (Rf)^pW_\infty\big)^{1-p/p_0}\\
&\leq 2^{p(1-p/p_0)}C^p[W(Rf)^{p-p_0}]_{A_{p_0}}^{\kappa p}\big(\E f^{p_0}W_\infty (Rf)^{p-p_0}\big)^{p/p_0}(\E f^pW_\infty\big)^{1-p/p_0}\\
&\leq (2^{1-p/p_0}C)^p[W]_{A_{p_0}}^{\kappa p}[Rf]_{A_1}^{\kappa p(p_0-p)} \E f^pW_\infty\\
&\leq (2^{1-p/p_0}C)^p(2\|M\|_{L^p(W)})^{\kappa p(p_0-p)}[W]_{A_{p_0}}^{\kappa p}\E f^pW_\infty,
\end{align*}
which is the desired estimate.

\emph{Case $p>p_0$.} By duality, we may write
$$ (\E g^pW_\infty)^{p_0/p}=\sup\left\{\left|\E g^{p_0}hW_\infty\right|\,:\,h\geq 0,\,\|h\|_{L^{p/(p-p_0)}(W)}\leq 1\right\}.$$
Fix a function $h$ as above. Then the function $H$, defined by the equality  $H^{p'}W_\infty^{1-p'}=h^{p/(p-p_0)}W_\infty$, satisfies $\|H\|_{L^{p'}(W_\infty^{1-p'})}\leq 1$. If we denote by $RH$ the $A_1$ weight obtained from Rubio de Francia algorithm, then, using the estimate $H\leq RH$, we get
\begin{align*}
\E g^{p_0}hW_\infty &\leq \E \big[g^{p_0}W_\infty^{(p_0-1)/(p-1)}(RH)^{(p-p_0)/(p-1)}\big]\\
&\leq C^{p_0}\left[W^{\frac{p_0-1}{p-1}}(RH)^{\frac{p-p_0}{p-1}}\right]_{A_{p_0}}^{\kappa p_0} \E f^{p_0}W_\infty^{(p_0-1)/(p-1)}(RH)^{(p-p_0)/(p-1)}\\
&\leq C^{p_0} \left([W]_{A_p}^{\frac{p_0-1}{p-1}}[RH]_{A_1}^{\frac{p-p_0}{p-1}}\right)^{\kappa p_0}\big(\E f^pW_\infty\big)^{p_0/p}\big(\E (RH)^{p'}W_\infty^{1-p'}\big)^{1-p_0/p}\\
&\leq 2^{p'(1-p_0/p)}C^{p_0} \left([W]_{A_p}^{\frac{p_0-1}{p-1}}(2\|M\|_{L^p(W)})^{\frac{p-p_0}{p-1}}\right)^{\kappa p_0}\big(\E f^pW_\infty\big)^{p_0/p}.
\end{align*}
This completes the proof.
\end{proof}

Equipped with the above theorem, we immediately obtain the proof of \eqref{mainin}.

\begin{proof}[Proof of \eqref{mainin}]
Fix a martingale $X$ and a weight $W$ as in the statement. We apply Theorem \eqref{thm-extrapolation} with $p_0=3$, $f=|X_\infty|$, $g=\langle X\rangle_\infty^{1/2}$ and $C=64$. If $p<3$, then, using \eqref{more-explicit}, the resulting constant $L(W)$ is bounded from above by
$$ 2^{1-p/3}\cdot 64 \cdot 2^{(3-p)/2}\left(\frac{pe}{p-1}\right)^{(3-p)/2}[W]_{A_p}^{1/(p-1)}\leq \frac{553}{p-1}[W]_{A_p}^{1/(p-1)}.$$
We used here an elementary fact (which is easy to prove) that the function
$$ p\mapsto 2^{1-p/3}\cdot 64 \cdot 2^{(3-p)/2}{(pe)^{(3-p)/2}}(p-1)^{(p-1)/2}$$
is decreasing on $[1,3]$ and smaller than $553$ for $p=1$. For $p>3$, we proceed similarly and bound $L(W)$ from above by
\begin{align*}
 2^{p'(1-3/p)/3}\cdot 64\cdot &2^{(p-3)/2(p-1)}\left(\frac{p'e}{p'-1}\right)^{(p-3)/2(p-1)}[W]_{A_p}^{1/2}\\
&=2^{5(p-3)/6(p-1)}\cdot 64\cdot (pe)^{(p-3)/2(p-1)}[W]_{A_p}^{1/2}.
\end{align*}
Since $(p-3)/(p-1)\leq 1$, this can be further bounded by
$$ 2^{5/6}\cdot 64\cdot e^{1/2} p^{1/2}[W]_{A_p}^{1/2}\leq 189 p^{1/2}[W]_{A_p}^{1/2}.$$
This is precisely the claim.
\end{proof}

\section{Proof of Theorem \ref{weights}}
We will now show how to deduce the weighted estimates for $g_*$ function from the martingale estimates studied in the preceding section. Recall that  $\D=\{x\in \bC: |z|<1\}$ is the unit disc in the complex plane and $\T=\partial D$ is its boundary, the unit circle. The associated Poisson kernel is given by the formula
$$P_z(e^{i\theta})=\frac{1-|z|^2}{|z-e^{i\theta}|^2}, \,\,\,\, z\in \D.$$
Suppose that $w$ is a weight, that is, a positive and integrable function on the unit circle. Denote by
$$
u_{w}(z)=\frac{1}{2\pi}\int_{\T} P_z(e^{i\theta}) w(e^{i\theta}) d\theta,\qquad z\in \D,
$$ 
the Poisson integral of $w$, i.e., the harmonic extension of $w$ to the unit disc $\D$. Recall that $w$ is said to be a Poissonian $A_p$ weight, if the condition \eqref{Poisson-T} is satisfied. Next, let $(B_t)_{t\geq 0}$ be Brownian motion in $\D$ started at the origin and let $\tau$ stand for its first exit time from $\D$.  Since $u_w$ is a harmonic function, the process $Y_t=u_{w}(B_{\tau\wedge t})$, $t\geq 0$, is a martingale terminating at the variable $Y_{\infty}=w(B_\tau)$. By the strong Markov property, we obtain that
$$\E_{0}\left(Y_{\infty}\,\,\big| \F_{\tau\wedge t}\right)=\E_{B_{\tau\wedge t}}\left(w(B_\tau)\right)=Y_t$$ 
and similarly for $Y_{\infty}^{-1/(p-1)}$:
$$
\E_0\left( Y_{\infty}^{-1/(p-1)}\big| \F_{\tau\wedge t}\right)=  \E_0\left(u_{w^{-1/p-1}}(B_{\tau})\,\big| \F_{\tau\wedge t}\right)=
\E_{B_{\tau\wedge t}}\left(u_{w^{-1/p-1}}(B_{\tau})\right).
$$
Comparing the conditions \eqref{probAp} and \eqref{Poisson-T}, we easily check  that $Y$ is a martingale $A_p$ weight if and only if $w$ is  a Poissonian $A_p$ weight. Actually, one  even has the equality
\begin{equation}\label{mart&T-equiv} 
[Y]_{A_p(\text{mart})}=[w]_{A_{p, \T}}. 
\end{equation}

We turn our attention to the Littlewood-Paley  $g_{*}$ function on the circle $\T$, given by \eqref{defgstar}. The crucial property of this square function, which gives the link to the probabilistic contents of the preceding sections, is that $g_*(f)$ can be represented as the conditional expectation of the square function  of the martingale $(u_f(B_{\tau\wedge t}))_{t\geq 0}$. More specifically, the application of It\^o's formula yields
$$
u_f(B_{\tau\wedge t})=u_f(0)+\int_0^{\tau\wedge t}\nabla u_f(B_s)\cdot dB_s
$$  
for all $t$, so the square function of this martingale equals
$$
\langle u_f(B)\rangle_{\tau\wedge t} =|u_f(0)|^2+\int_0^{\tau\wedge t}|\nabla u_f(B_s)|^2ds. 
$$
Now, it can be shown that
$$
g^2_*(f)(e^{i\theta})=\E_{0}^{\theta}\left(\int_0^{\tau}|\nabla u_f(B_s)|^2 ds\right)=\E_{0}\left( \int_0^{\tau}|\nabla u_f(B_s)|^2 ds \,\,\big| B_\tau=e^{i\theta} \right).
$$
We refer the reader to \cite[p.~650]{Ban1} for the detailed proof of this formula. Because the random variable $B_{\tau}$  is uniformly distributed on the circle $\T$ under $\p_0$, we may write, for any $p\geq 2$,
\begin{equation}\label{g-disc}
\begin{split}
\frac{1}{2\pi}\int_{\T} g^p_*(f)(e^{i\theta})w(e^{i\theta})d\theta&=\E_0\left(\left[\E_0\left( \int_0^{\tau}|\nabla u_f(B_s)|^2 ds\,\,\big| B_\tau\right)\right]^{p/2}w(B_\tau)\right)\\
&\leq \E_0\left( \left(\int_0^{\tau}|\nabla u_f(B_s)|^2 ds\right)^{p/2}\,w(B_\tau)\right). 
\end{split}
\end{equation}
for any weight $w$ on $\T$.

Equipped with the above representation, 
we apply it together with \eqref{mainin} to obtain 
\begin{align*}
\frac{1}{2\pi}\int_{\T} g^p_*(f)(e^{i\theta})w(e^{i\theta})d\theta&\leq \E_0\left( \left(\int_0^{\tau}|\nabla u_f(B_s)|^2 ds\right)^{p/2}\,w(B_\tau)\right)\\
&\leq \E_0 \bigg(\langle u_f(B)\rangle_{\tau}^{p/2}w(B_\tau)\bigg)\\
&\leq  K_p^p[w]_{A_{p,\T}}^{\max\{1/2,1/(p-1)\}p}\bigg(\E_0 |u_f(B_\tau)|^p w(B_\tau)\bigg)\\
&=K_p^p[w]_{A_{p,\T}}^{\max\{1/2,1/(p-1)\}p} \frac{1}{2\pi}\int_\T |f(e^{i\theta})|^pw(e^{i\theta})\mbox{d}\theta.
\end{align*} 
This is precisely the claim inequality in Theorem \ref{weights}.

The above result yields an immediate consequence for the Lusin area function and the Littlewood-Paley function associated with $f:\T\to \R$. Let us recall the necessary definitions. For $0<\alpha<1$, the Stoltz domain $\Gamma_{\alpha}(\theta)$ is the interior of the smallest convex set containing the disc $\{z\in \bC: |z|<\alpha\}$ and the point $e^{i\theta}$.  Then the Lusin area function (area integral) of $f$ is given by 
$$
A_{\alpha}(f)(e^{i\theta})=\left(\int_{\Gamma_{\alpha}(\theta)} |\nabla u_f(z)|^2 dz\right)^{1/2}. 
$$
Furthermore, the Littlewood-Paley function $g$ is an operator defined by  the formula
$$
g(f)(e^{i\theta})=\left(\int_0^1 (1-r)|\nabla u_f(re^{i\theta})|^2 dr\right)^{1/2}.
$$
It is not difficult to show that there are universal constant $C_{\alpha}$ and $C$ such that the pointwise inequalities $A_{\alpha}(f)(e^{i\theta})\leq C_{\alpha} g_*(f)(e^{i\theta})$ and $g(f)(e^{i\theta})\leq C g_*(f)(e^{i\theta})$ hold true.  Thus, \eqref{g*one} gives the following corollary with the same $K_p$.  

\begin{corollary} Let $p\geq 2$ and suppose that $w\in A_p(Poisson, \T)$ and  $f\in C(\T)$. Then we have
\begin{equation}
\|A_{\alpha}(f)(e^{i\theta})\|_{L^p_w(\mathbb{T})}\leq K_pC_{\alpha}([w]_{A_{p,\T}})^{\max\{1/2,1/(p-1)\}}\|f\|_{L^p_w(\mathbb{T})}
\end{equation}
and 
\begin{equation}\label{maininT4}
\|g(f)(e^{i\theta})\|_{L^p_w(\T)}\leq K_pC([w]_{A_{p,\T}})^{\max\{1/2,1/(p-1)\}}\|f\|_{L^p_w(\T)}.
\end{equation}
\end{corollary}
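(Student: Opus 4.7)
The plan is straightforward: apply Theorem \ref{weights} to the right-hand sides of the pointwise dominations
\[A_{\alpha}(f)(e^{i\theta}) \leq C_{\alpha}\,g_{*}(f)(e^{i\theta}), \qquad g(f)(e^{i\theta}) \leq C\,g_{*}(f)(e^{i\theta}),\]
which are asserted (as classical pointwise comparisons) in the paragraph immediately preceding the corollary. Since the weighted norm $\|\cdot\|_{L^{p}_{w}(\T)}$ is monotone, raising both sides to the $p$-th power, multiplying by $w(e^{i\theta})$ and integrating over $\T$ give
\[\|A_{\alpha}(f)\|_{L^{p}_{w}(\T)} \leq C_{\alpha}\|g_{*}(f)\|_{L^{p}_{w}(\T)}, \qquad \|g(f)\|_{L^{p}_{w}(\T)} \leq C\|g_{*}(f)\|_{L^{p}_{w}(\T)},\]
and inserting \eqref{g*one} produces the desired estimates with the same constant $K_{p}$.

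If one wants to verify the pointwise comparisons from scratch, then for the Lusin area function the non-tangential condition defining $\Gamma_{\alpha}(\theta)$ yields $|z-e^{i\theta}| \leq C_{\alpha}(1-|z|)$ for $z \in \Gamma_{\alpha}(\theta)$, hence $\tfrac{1-|z|^{2}}{|z-e^{i\theta}|^{2}} \geq c_{\alpha}(1-|z|)^{-1}$; coupled with the elementary bound $\ln(1/|z|) \geq c(1-|z|)$ on $\{|z| \geq \alpha\}$ (the compact piece $\Gamma_{\alpha}(\theta)\cap\{|z|<\alpha\}$ is handled by interior Cauchy-type bounds on $|\nabla u_{f}|$), the integrand of $g_{*}^{2}$ dominates a positive multiple of $|\nabla u_{f}(z)|^{2}\mathbf{1}_{\Gamma_{\alpha}(\theta)}(z)$, which integrates to $A_{\alpha}(f)^{2}$. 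For the Littlewood--Paley $g$-function, the standard trick is to replace $|\nabla u_{f}(re^{i\theta})|^{2}$ via subharmonicity by its average over a Euclidean disc of radius $\sim (1-r)/2$ centered at $re^{i\theta}$, swap the order of integration by Fubini, and check that the resulting planar kernel is dominated by the $g_{*}$-weight $\tfrac{1-|z|^{2}}{|z-e^{i\theta}|^{2}}\ln(1/|z|)$.

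The main obstacle, if one insists on self-containedness, is the second pointwise domination $g(f) \leq C\,g_{*}(f)$, where the kernel comparison following the Fubini swap must be executed carefully. However, this is classical Littlewood--Paley material and is explicitly flagged by the paper as ``not difficult to show''; granted the two pointwise inequalities, the corollary reduces to a single invocation of \eqref{g*one} and no new probabilistic input beyond Theorem \ref{weights} is required.
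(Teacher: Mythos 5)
Your proof is correct and takes essentially the same route as the paper: the paper's own proof is exactly the one-line observation that the two classical pointwise dominations $A_\alpha(f)\leq C_\alpha g_*(f)$ and $g(f)\leq C g_*(f)$ combined with \eqref{g*one} give the result with the same $K_p$. The additional sketch you give of the pointwise comparisons is reasonable (though in fact the $g_*$ kernel $\frac{1-|z|^2}{|z-e^{i\theta}|^2}\ln\frac{1}{|z|}$ is already bounded below by a positive constant on all of $\Gamma_\alpha(\theta)$, so the compact piece needs no separate Cauchy-type estimate); the paper simply cites these dominations as classical and does not prove them.
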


As already mentioned, the extensions to the various $g_{*}$, $A_{\alpha}$ and $g$ square functions on $\R^n$, $n\geq 1$,    are exactly as in \cite{BO}. The bounds are optimal with respect to $[w]_{A_p}$ and the constants $K_p$ has the best order as $p\to\infty$.  In the case of both $g_{*}$ and $g$, the bounds are independent of the dimension and for $A_{\alpha}$ the dependence  on $n$ and $\alpha$ is explicit as in Corollaries  5.2 and  5.3  in \cite{BO}.  The corresponding $L^p$ version of Theorem  6.1 in \cite{BO} for manifolds of non-negative Ricci curvature also holds.

\end{document}